\newtheorem{theorem}{Theorem}[section]
\newtheorem{lemma}[theorem]{Lemma}
\newtheorem{corollary}[theorem]{Corollary}
\theoremstyle{definition}
\newtheorem{defn}[theorem]{Definition}
\newcommand{\R}{\mathbb{R}}
\newcommand{\Z}{\mathbb{Z}}
\DeclareMathOperator{\spn}{span}
\newcommand{\one}{\ensuremath{{\bf 1}}}
\newcommand{\zero}{\ensuremath{{\bf 0}}}
\def\be{\begin{equation}}
\def\ee{\end{equation}}
\newcommand{\chiv}{\chi_{v}}
\newcommand{\sym}{\mathcal{S}}
\newcommand{\hnk}{H_{n,k}}
\newcommand{\qeds}{\qed\vspace{.2cm}}
\newcommand{\trans}{{\sf T}}
\DeclareMathOperator{\tr}{Tr}
\DeclareMathOperator{\im}{Im}
\DeclareMathOperator{\gram}{Gram}
\title{Graph Homomorphisms via Vector Colorings}
\author[1]{Chris Godsil}
\author[2]{David E.~Roberson}
\author[3]{Brendan Rooney}
\author[4]{Robert \v{S}\'{a}mal}
\author[5,6]{Antonios Varvitsiotis}
\affil[1]{\small{Department of Combinatorics \& Optimization, University of Waterloo}}
\affil[2]{Department of Computer Science, University College London}
\affil[3]{Department of Mathematical Sciences, KAIST}
\affil[4]{Computer Science Institute, Charles University}
\affil[5]{Centre for Quantum Technologies, National University of Singapore}
\affil[6]{School of Physical and Mathematical Sciences, Nanyang Technological University}
\begin{document}
\maketitle
\begin{abstract}
In this paper we study the existence of homomorphisms  $G\to H$ using  semidefinite programming.  Specifically, we use the vector chromatic number of a graph, defined  as the smallest real number $t \ge 2$ for which  there exists an assignment of unit vectors $i\mapsto p_i$  to its vertices such that $\langle p_i, p_j\rangle\le -1/(t-1),$ when~$i\sim  j$. Our approach  allows to reprove, without using the Erd{\H o}s-Ko-Rado Theorem, that for  {$n>2r$} the Kneser graph $K_{n:r}$ and the  $q$-Kneser graph $qK_{n:r}$ are cores,  and furthermore,   that  for    $n/r = n'/r'$  there exists a homomorphism  $K_{n:r}\to K_{n':r'}$ if and only if $n$ divides $n'$. In terms of new applications,  we show  that the even-weight component of the distance $k$-graph of the $n$-cube  $\hnk$   is  a core and also, that non-bipartite Taylor graphs are cores. Additionally,  we give a necessary and sufficient condition for the existence of homomorphisms~$\hnk\to H_{n',k'}$ when   $n/k = n'/k'$. Lastly, we  show that if a 2-walk-regular graph (which is non-bipartite and not complete multipartite) has a unique  optimal vector coloring, it is a core. Based on this sufficient condition  we conducted a computational study  on  Ted Spence's list  of strongly regular~graphs \cite{Spence}  and found that at least 84\% are cores.



 

\end{abstract}

\section{Introduction}\label{sec:intro}

 A \emph{homomorphism} from a graph $G$ to a graph $H$ is an adjacency preserving map from $V(G)$ to $V(H)$. Formally, a function $\varphi:V(G)\rightarrow V(H)$ is a homomorphism from $G$ to $H$ if $\varphi(u)$ and $\varphi(v)$ are adjacent in $H$  whenever $u$ and $ v$ are adjacent in $G$. If there exists  a homomorphism from $G$ to $H$ we write $G\rightarrow H$. 

Many well-known graph parameters can be defined through graph homomorphisms. One example is the \emph{chromatic number} of a graph $G$, denoted by $\chi(G)$, which is defined as is the least  number of colors required to color the vertices of $G$ such that no two adjacent vertices receive the same color. Equivalently, $\chi(G)$ is the minimum integer $m$ so that $G\rightarrow K_m$, where $K_m$ is the complete graph on $m$ vertices. Other examples include the clique number, the fractional chromatic number, and the circular chromatic number. 
The interested reader is referred to~\cite{HahnTardif, HN04} for an extensive survey of graph homomorphisms.  

In this work we study the existence of  homomorphisms from a  graph $G$ to a graph~$H$. 
This  problem is important as  many important graph-theoretic questions can be phrased as deciding the existence of a homomorphism between two  graphs. 
Nevertheless, it is known that  for  a non-bipartite graph $H$, deciding whether a graph has a homomorphism to $H$  is NP-hard~\cite{HN}. 


In this paper we study the existence of homorphisms  $G\to H$  using semidefinite programming, and more specifically, using vector colorings. For $d\geq 1$ and $t\geq 2$,  let $ \mathcal{S}_t^d$  be  the infinite graph whose vertices are the unit vectors in $\mathbb{R}^d$, where two unit vectors are adjacent if and only if their inner product is at most $-1/(t-1)$. A homomorphism from $G$ to $\mathcal{S}_t^d$ is called a \emph{vector $t$-coloring} of $G$. Equivalently, a vector $t$-coloring of $G$ is an assignment $i\mapsto p_i$  of  unit vectors in $\mathbb{R}^d$ to the vertices of $G$ such that 
\begin{equation}\label{condition}
\langle p_i,p_j\rangle \le \frac{-1}{t-1}\quad\text{whenever}\quad i\sim j,
\end{equation}
where $\langle \cdot,\cdot\rangle$ denotes the standard  inner product in $\R^d$. Note that we will often assume that the vertex set of a graph is $[n] = \{1, \ldots, n\}$ unless otherwise specified.

The \emph{vector chromatic number} of $G$ is the smallest $t\geq 2$ for which $G\rightarrow\mathcal{S}_t^d$ (for some integer $d\ge 1$) and is denoted $\chi_v(G)$. By convention, the vector chromatic number of the empty graph is equal to one. We call a vector $t$-coloring of $G$ \emph{optimal} if $t=\chi_v(G)$. Note that  without loss of generality  we can always set $d=|V(G)|$, as the space spanned by (the images of) the vertices of $G$ has dimension at most $|V(G)|$. A vector $t$-coloring is \emph{strict} if every inequality in~\eqref{condition} is met with equality. The \emph{strict vector chromatic number} of $G$, denoted $\chi_{sv}(G)$, is the smallest $t\geq 2$ for which $G$ has a strict vector $t$-coloring.

Both $\chi_v(G)$ and $\chi_{sv}(G)$ were originally introduced by Karger et al.~\cite{KMS} as relaxations of $\chi(G)$. These parameters satisfy the relation $\chi_v(G)\le\chi_{sv}(G)\le~\chi(G)$. Karger et al.~noted that $\chi_{sv}(G)$ is the Lov{\'a}sz theta function of the complement of $G$~\cite{KMS}. Furthermore, $\chi_v(G)=\vartheta'(\overline{G})$ where $\vartheta'$ is a variant of the Lov{\'a}sz theta introduced in~\cite{schrijver} and~\cite{McRR}.

We study the existence of homorphisms  $G\to H$ when  the  graphs $G$ and $ H$ have  the same vector chromatic number, i.e.,  $\chi_v(H)=\chi_v(G)$. The high-level idea is the following. If $\varphi_1$ is an optimal vector coloring of $H$  and $\varphi_2$  a homomorphism from $G$ to $H$,  the map  $\varphi_1\circ\varphi_2$ is  an optimal  vector coloring of $G$. As a consequence, properties  of  optimal vector colorings of $G$   translate to properties of homomorphisms $G\to H$. As an example, if all optimal vector colorings of $G$ are injective, any homomorphism $G\to H$ will also be injective. 

Since  the vector chromatic number  of a graph is given by a semidefinite program,  an optimal vector coloring  can be identified to arbitrary precision in polynomial time. Nevertheless,  finding the set of  {\em all} optimal vector colorings is in general a hard problem.  For this reason, in this paper we further restrict our attention to graphs  that are  \emph{uniquely vector colorable}~(UVC), i.e.,  any two optimal vector colorings   differ only by an orthogonal transformation.   

Formally, a graph $G$ is called \emph{uniquely (strict) vector colorable} if for any two 
{optimal} (strict) vector colorings $i\mapsto p_i\in \R^d$ and $i\mapsto q_i\in \R^{d'}$ the corresponding Gram matrices coincide, i.e., 
\be\label{equalgram}
\gram(p_1, \ldots, p_n) = \gram(q_1, \ldots, q_n).\
\ee
We  say that $i\mapsto p_i$ is the {\em unique optimal vector coloring} of $G$ if  for any other optimal vector coloring 
$i\mapsto q_i$,  Equation \eqref{equalgram} holds. Furthermore, we say that two vector colorings  $i\mapsto p_i$ and $i\mapsto q_i$ are {\em congruent} if they  satisfy~\eqref{equalgram}.


Although deciding whether a graph is UVC is hard, there exist sufficient conditions for showing that a graph is UVC. For example, such conditions were developed in \cite{UVC1} where  it was shown that  the \emph{Kneser graphs} $K_{n:r}$ and their $q$-analogs, the \emph{q-Kneser graphs} $qK_{n:r}$ are UVC. These graphs have nontrivial structure: the vertex set of $K_{n:r}$ consists of the $r$-subsets of $[n]$, with disjoint subsets being adjacent. Similarly, the vertices of $qK_{n:r}$ are the $r$-dimensional subspaces of $\mathbb{F}_q^n$, two being adjacent if they intersect in the trivial subspace. Furthermore, UVC graphs are interesting in their own right. They were first introduced in~\cite{pak} to construct tractable instances of the graph realization problem. In the same work UVC graphs were used to construct uniquely colorable graphs. UVC graphs are also closely related to the notion of universal completability (equivalently, the universal rigidity of apex graphs). This in turn is relevant to the low-rank matrix completion problem~\cite{LV}.


%


\subsection{Summary of  results and paper organization} 

%
%
%
%
%

\paragraph{Graph endomorphisms.} In Section \ref{sec:cores}   we study the existence of  \emph{endomorphisms} of a graph $G$, i.e.,  homomorphisms from  $G$ to itself. 
Our goal is to find sufficient conditions to show that $G$ does not admit any  endomorphisms to a proper subgraph.  Graphs that have  this property are known as \emph{cores}.

For an arbitrary graph $G$, the \emph{core of $G$} is the vertex minimal subgraph to which $G$ admits a homomorphism. Every graph has a unique core, and the core of $G$ is itself a core. Moreover, $G$ and $H$ have the same core if and only if they are \emph{homomorphically equivalent}, i.e., $G\rightarrow H$ and $H\rightarrow G$. Cores are the unique minimal elements of these homomorphic equivalence classes. In this sense, the core of a graph is the smallest graph retaining all  its  homomorphic~information. 

It is known that deciding whether a graph is a core is a co-NP-complete problem~\cite{NPcore}.
In Section~\ref{sec:suffcores} we show that if $G$ is UVC, and its unique optimal vector coloring is injective on the neighborhood of each vertex, then $G$ is a core.

 To illustrate the usefulness of this  sufficient condition, in Section~\ref{sec:knesercores} we show   that for $n\geq 2r+1$, both the Kneser graph $K_{n:r}$ and the $q$-Kneser graph   $qK_{n:r}$    are cores. Although this    is well-known~\cite{HahnTardif},  our proof avoids invoking the Erd{\H o}s-Ko-Rado Theorem,  used to describe the structure of the maximum independent sets of these graphs, and it also avoids using the No Homomorphism~Lemma~\cite{GR}.
 
 In terms of new applications, we  show that a family of Hamming distance graphs constructed from the $k$-distance graphs of the $n$-cube are cores.  These graphs, denoted $H_{n,k}$, have the even weight binary strings of length $n$ as their vertices, two being adjacent if they differ in precisely $k$ positions. In Section~\ref{sec:hamming}, we show that these graphs are UVC for even $k \in [n/2 +1,n-1]$.
In Section~\ref{subsec:2walkreg} we focus on  2-walk-regular graphs. We show that if a 2-walk-regular graph (that is not bipartite or complete multipartite) is UVC,  it is also a core.  Furthermore, in Section \ref{sec:Taylor} we show that non-bipartite Taylor graphs are~cores.

Finally, in Section~\ref{sec:SRGcomps} we give an  algorithm for testing whether a 2-walk-regular graph is a core. 
We apply this  algorithm  to $73816$ strongly regular graphs obtained from Ted Spence's webpage~\cite{Spence}, showing that $62168$ (approx.~$84\%$) of them are UVC,  and therefore cores.





\paragraph{Homomorphisms between graphs with $\chi_v(G)=\chi_v(H)$.} In Section~\ref{sec:equalchivec}, we study necessary and sufficient conditions for the existence of homomorphisms from $G$ to $H$ for a pair of graphs satisfying  $\chi_v(G)=\chi_v(H)$. 

In Section \ref{sec:kneser}  we focus  on Kneser graphs.  It is an open problem to determine  all possible homomorphisms between Kneser graphs (e.g., see \cite[Problem 11.2]{G95}). On the positive side, using  the Erd{\H o}s-Ko-Rado~Theorem,  Stahl showed in~\cite{stahl}  that if $n/r=n'/r'$,  then $K_{n:r}\rightarrow K_{n':r'}$ if and only if $n'$ is an integer multiple of~$n$. As the condition $n/r=n'/r'$ is equivalent to $\chi_v(K_{n:r})=\chi_v(K_{n':r'})$,  we are able to reprove this result using our approach. 

In Section \ref{sec:qknesernesssufficient} we  consider the family of $q$-Kneser graphs. Again, we study the existence of  homomorphisms from $qK_{n:r}$ to $q'K_{n':r'}$ where $\chi_v(qK_{n:r})=\chi_v(q'K_{n':r'})$. Our main result is that, under this assumption, the existence of a homomorphism from $qK_{n:r}$ to $q'K_{n':r'}$ implies that the $q$-binomial coefficient $[n']_q$ is an integer multiple of the $q'$-binomial coefficient $[n]_{q'}$. 

Finally, in Section \ref{sec:hamming_homs} we give necessary and sufficient conditions for the existence of homomorphisms  $H_{n,k} \to H_{n',k'}$ when $\chi_v(H_{n,k})=\chi_v(H_{n',k'})$. 

  \section{Preliminaries}
  
\subsection{Basic definitions and notation}\label{sec:definitions}

Throughout we set $[n]=\{1,\ldots,n\}$. We denote by $e_i$ the $i^\text{th}$ standard basis vector, by $\one$ the all-ones vector and by $\zero$ the all-zeros vector of appropriate size.  All vectors are column  vectors. We denote by $\langle \cdot ,\cdot  \rangle $ the usual inner product between two real vectors. Furthermore, we denote by $\spn(p_1,\ldots,p_n)$ the linear span of the vectors $\{p_i\}_{i=1}^n$.   
The set of  $n\times n$ real symmetric matrices is denoted by $\sym^n$, and the set of matrices in $\sym^n$ with nonnegative eigenvalues, i.e., the real positive semidefinite matrices, is denoted by $\sym^n_+$. Given a matrix $X\in \sym^n$ we denote its kernel/null space by ${\rm Ker} X$  and its image/column space by $\im X$. 
The {\em Schur} product of two matrices $X,Y\in \sym^n$, denoted by $X\circ Y$,  is the matrix whose entries are given by $(X\circ Y)_{ij}=X_{ij}Y_{ij}$ for all $i,j\in [n]$. 
 A  matrix $X\in \sym^n$ has real eigenvalues, and we denote the smallest one by~$\lambda_{min}(X)$.   The {\em Gram matrix} of a set of vectors $v_1, \ldots, v_n$, denoted  by $\gram(v_1, \ldots, v_n)$, is the $n\times n$  matrix with $ij$-entry equal to $\langle  v_i, v_j\rangle $. The matrix $\gram(v_1, \ldots, v_n)$ is positive semidefinite and its rank is equal to the dimension of  $  \spn(p_1,\ldots,p_n)$. We denote by ${\rm sum}(X)$ the sum of all entries in $X$  and use that ${\rm sum}(X\circ Y)=~{\rm Tr}(XY^T)$.
 
  \subsection{1-walk-regular graphs} 
A graph   $G$ with adjacency matrix $A$ is said to be \emph{1-walk-regular} if for all $k \in \mathbb{N}$, there exist constants $a_k$ and $b_k$ such that
\begin{itemize}
\item[$(i)$] $A^k \circ I = a_k I$;
\item[$(ii)$]  $A^k \circ A = b_k A$.
\end{itemize}
Equivalently, a graph is 1-walk-regular if for all $k \in \mathbb{N}$, $(i)$ the number of walks of length $k$ starting and ending at a vertex does not depend on the choice of vertex, and $(ii)$ the number of walks of length $k$ between the endpoints of an edge does not depend on the edge.

Note that a 1-walk-regular graph must be regular. Also, any graph which is vertex and edge transitive is easily seen to be 1-walk-regular. More generally, any graph which is a single class of an association scheme is 1-walk-regular. These include distance regular and, more specifically, strongly regular graphs, the latter of which is  the focus of Section~\ref{sec:SRGcomps}.


 
 Graphs that are 1-walk-regular are particularly relevant to this work because they have a \emph{canonical vector coloring} and furthermore, there exists  a necessary and sufficient condition for this to be the unique vector coloring of such a graph. We first  give the definition of the canonical vector coloring.


\begin{defn}\label{def:canonicalvc} 
Consider a  1-walk-regular graph $G=([n],E)$ and let $d$ be the multiplicity of   the least eigenvalue of its adjacency matrix. Furthermore, let $Q$ 
be an $n\times d$ matrix whose columns form  an orthonormal basis for  the eigenspace of the least eigenvalue of~$G$ and let $p_i\in \R^d$ be the $i$-th row of $Q$. 
 The assignment $i\mapsto  \sqrt{\frac{n}{d}}p_i\in~\R^d$ is  a vector coloring of $G$  which we call the  {\em canonical vector~coloring}. 
\end{defn}

Consider  a 1-walk-regular graph $G$ with least eigenvalue $\tau$. Note that the  vectors in the  canonical vector coloring linearly span the ambient space, i.e., ${\rm span}(p_1,\ldots,p_n)=~\R^d$. Also,     the canonical vector coloring  of  $G$ is not uniquely defined  since there are many choices  of orthonormal basis for the least eigenspace. Nevertheless, all canonical vector colorings are congruent and thus indistinguishable for our purposes. Indeed,  for any orthonormal basis 
of the least eigenspace,  the   Gram matrix  of the corresponding canonical vector coloring  is equal to a scalar multiple of  the orthogonal projector $E_\tau$ onto the least eigenspace of $G$. To see this, let $Q$ be the matrix whose columns are the chosen orthonormal basis vectors, and consider how the matrix $QQ^\trans$ acts on the least eigenspace of $G$ and its orthogonal complement.
Furthermore, it follows  by the definition of a canonical vector coloring that  $\tau p_i = \sum_{j \sim i} p_j$ for all $i\in V(G)$.

Lastly, recall that  the projector $E_\tau$ onto the least eigenspace of a graph  $G$ is a polynomial in the adjacency matrix of $G$. Concretely, we have that $E_\tau=\prod_{\lambda\ne \tau}{1\over \tau-\lambda}(A-\lambda I)$. Thus, if $G$ is   1-walk-regular, the diagonal entries of  $E_\tau$ and the entries of $E_\tau$ that correspond to edges of $G$ are~constant. 

We are now ready to give a necessary and sufficient condition for a $1$-walk-regular graph to be UVC.

\begin{theorem}[\cite{UVC1}] \label{thm:1walkreg}
Let $G=([n],E)$ be 1-walk-regular with degree $k$ and let $ i \mapsto p_i\in \R^d$ be  its canonical vector coloring. Then, we have that:
\begin{itemize}
\item[$(i)$]  $\chiv(G)=1-{k\over  \lambda_{{\rm min}}(G)}$ and  $ i \mapsto p_i$ is an optimal  strict vector coloring of $G$.
\item[$(ii)$] $G$ is uniquely vector   colorable if and only if  for any $R\in \mathcal{S}^d$ we have
\be\label{eq:conicatinfinity}
p_i^\trans R p_j = 0 \ \text{ for all } \ i \simeq j \Longrightarrow R=0.
\ee
where $ i \simeq j$ means that the vertices $i$ and $ j$ are either equal or adjacent. 
\end{itemize} 
\end{theorem}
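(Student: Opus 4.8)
The plan is to pass to Gram matrices and identify the optimal vector colorings of $G$ with a face of the positive semidefinite cone, reading off unique vector colorability from whether that face is a single point. Set $\tau := \lambda_{\min}(G)$ and let $E$ be the orthogonal projection onto the $\tau$-eigenspace, so the canonical coloring has Gram matrix $M_0 = \frac{n}{d}E$. Since $E$ is a polynomial in $A$, the 1-walk-regularity hypotheses give $E\circ I = \alpha I$ and $E\circ A = \beta A$; taking traces yields $\alpha = d/n$ (so the canonical vectors are unit vectors), and reading the diagonal of $AE = \tau E$ gives $k\beta = \tau d/n$, i.e. $\beta = \tau d/(nk)$. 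Hence the edge entries of $M_0$ all equal $\frac{n}{d}\beta = \tau/k = -1/(t-1)$ with $t := 1 - k/\tau$, so the canonical coloring is a strict vector $t$-coloring. For the matching lower bound I take any vector $t'$-coloring with Gram matrix $M$ (so $M\succeq 0$, $M_{ii}=1$, and $M_{ij}\le -1/(t'-1)$ for $i\sim j$): summing over the $nk$ ordered edges, $\tr(MA)=\sum_{i\sim j}M_{ij}\le -nk/(t'-1)$, while $A-\tau I\succeq 0$ forces $\tr(MA)\ge \tau\,\tr(M)=\tau n$; comparing gives $t'\ge 1-k/\tau$. This proves $(i)$.

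The crux of $(ii)$ is the equality case at $t'=t$, where both estimates above become equalities. Equality in the first forces $M_{ij}=\tau/k$ on every edge, and $\tr(M(A-\tau I))=0$ with two positive semidefinite factors forces $(A-\tau I)M=0$, i.e. $\im M\subseteq \im E$, equivalently $M=EME$. Therefore the set of optimal Gram matrices is exactly
$\mathcal F=\{M\succeq 0: M_{ii}=1,\ M_{ij}=\tau/k\ (i\sim j),\ M=EME\}$, it contains $M_0$, and $G$ is uniquely vector colorable precisely when $\mathcal F=\{M_0\}$.

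Writing $E=QQ^\trans$, any $M\in\mathcal F$ obeys $D:=M-M_0=QRQ^\trans$ with $R:=Q^\trans D Q\in\sym^d$, and a direct computation gives $D_{ij}=p_i^\trans R p_j$, where $D_{ij}=0$ exactly for $i\simeq j$ (the diagonal and the edges, the places where $M$ and $M_0$ agree); moreover $D=0$ iff $R=0$, since $Q$ has orthonormal columns. This reduces $(ii)$ to the equivalence ``$\mathcal F=\{M_0\}$ iff the only $R$ with $p_i^\trans R p_j=0$ for all $i\simeq j$ is $R=0$''. One direction is immediate: if \eqref{eq:conicatinfinity} holds, then for every $M\in\mathcal F$ the associated $R$ vanishes, so $D=0$ and $M=M_0$, whence $G$ is uniquely vector colorable.

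For the converse I argue contrapositively: given $R\ne 0$ with $p_i^\trans R p_j=0$ for all $i\simeq j$, set $M_\varepsilon := M_0+\varepsilon\,QRQ^\trans$, which keeps $(M_\varepsilon)_{ii}=1$, $(M_\varepsilon)_{ij}=\tau/k$ on edges, and $M_\varepsilon=EM_\varepsilon E$. The one step needing care, and the main obstacle, is keeping $M_\varepsilon$ positive semidefinite so that it is a genuine coloring: here I use that $M_0=\frac{n}{d}E$ restricts to $\frac{n}{d}I$ on $\im E$, so $M_\varepsilon$ restricts to $\frac{n}{d}I_d+\varepsilon R$ there, which is positive definite for all small $\varepsilon$, while $M_\varepsilon$ vanishes on $(\im E)^\perp$. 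Thus $M_\varepsilon\in\mathcal F\setminus\{M_0\}$ for small $\varepsilon\ne 0$, and $G$ is not uniquely vector colorable. More broadly, the heart of the proof is the equality analysis pinning down $\mathcal F$ as a face of the spectrahedron, since it is exactly what converts the nonlinear semidefinite feasibility question into the linear kernel condition \eqref{eq:conicatinfinity}; the positive definiteness of $M_0$ on its own support is what guarantees that local perturbations already detect the whole face.
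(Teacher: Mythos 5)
Your proof is correct. Note that the paper itself does not prove this theorem --- it is imported from~\cite{UVC1} as a black box --- so there is no in-text argument to compare against; but your derivation is complete and matches the standard route: part $(i)$ via the trace pairing of a Gram matrix against $A$ and against $A-\tau I$, and part $(ii)$ by pinning down the optimal face as $\{M\succeq 0 : M_{ii}=1,\ M_{ij}=\tau/k \ (i\sim j),\ M=EME\}$ through the complementary-slackness identity $\tr(M(A-\tau I))=0\Rightarrow (A-\tau I)M=0$, and then parametrizing that face by symmetric perturbations $QRQ^\trans$ with $p_i^\trans Rp_j=0$ for $i\simeq j$. The one point worth making explicit is that $1-k/\tau\ge 2$ (so that the canonical coloring is an admissible vector coloring at all), which follows from $\tau\ge -k$; and the positive-definiteness of $M_0$ restricted to $\im E$, which you correctly identify as the step that lets small perturbations witness non-uniqueness, is exactly what makes the linear condition \eqref{eq:conicatinfinity} equivalent to the a priori nonlinear uniqueness question.
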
 

We note that  the calculation for the vector chromatic number of a 1-walk-regular graph was first done in  \cite[Lemma 5.2]{sabvshed}.

\section{Graph cores}\label{sec:cores}
\subsection{A sufficient condition for a graph to be a core}\label{sec:suffcores}

A homomorphism $\varphi$ is \emph{locally injective} if it acts injectively on the neighborhood of any vertex, i.e.,~if $\varphi(u) \ne \varphi(v)$ for any two vertices $u$ and $v$ that have a common neighbor. 
We recall the  following property of endomorphisms  proved by Ne\v{s}et\v{r}il  which we use  to make the connection between cores and vector~colorings:

\begin{theorem}[\cite{N71}]\label{lem:Nes}
Let $G$ be a connected graph. Every locally injective endomorphism of $G$ is an automorphism.
\end{theorem}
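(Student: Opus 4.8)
The plan is to reduce the statement to surjectivity of $\varphi$. Indeed, a bijective endomorphism of a finite graph is automatically an automorphism: if $\varphi$ is injective on the (finite) vertex set, then the induced map $E(G)\to E(G)$, $\{u,v\}\mapsto\{\varphi(u),\varphi(v)\}$, is well defined and injective, hence a bijection, so every edge is of the form $\{\varphi(a),\varphi(b)\}$ with $a\sim b$; this shows $\varphi(u)\sim\varphi(v)\Rightarrow u\sim v$, i.e.\ $\varphi^{-1}$ is also edge-preserving. Since on a finite graph a surjective endomorphism is forced to be injective, the whole problem collapses to showing that a locally injective endomorphism of a finite connected graph is onto.

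First I would record two elementary facts. The composition of locally injective endomorphisms is again locally injective: if $u,v$ have a common neighbor $w$, then $\psi(u),\psi(v)$ have the common neighbor $\psi(w)$ and are distinct by local injectivity of $\psi$, so applying a second locally injective map keeps their images distinct. Consequently every power $\varphi^k$ is locally injective. Second, local injectivity forces $d(\varphi(v))\ge d(v)$ for each vertex $v$, since $\varphi$ maps $N(v)$ injectively into $N(\varphi(v))$.

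Next I would pass to an idempotent power. As $V(G)$ is finite there exist $i<j$ with $\varphi^i=\varphi^j$, and for a suitable multiple $m$ of $j-i$ the map $\rho:=\varphi^m$ satisfies $\rho^2=\rho$. Thus $\rho$ is a retraction onto the induced subgraph $H$ on the vertex set $\rho(V(G))$, it restricts to the identity on $H$, and by the first fact it is still locally injective. The crux is to prove $H=G$. Fix $v\in V(H)$, so $\rho(v)=v$; since $\rho$ is a homomorphism, $\rho(N(v))\subseteq N(\rho(v))=N(v)$, and since the image of $\rho$ lies in $V(H)$ we get $\rho(N(v))\subseteq N(v)\cap V(H)=N_H(v)$. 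Local injectivity makes $\rho$ injective on $N(v)$, whence
\[
d_G(v)=|\rho(N(v))|\le |N_H(v)|=d_H(v)\le d_G(v),
\]
forcing $N_H(v)=N(v)$. Therefore every neighbor of a vertex of $H$ again lies in $H$, so $V(H)$ is closed under adjacency; as $G$ is connected and $H\neq\varnothing$, this yields $H=G$. Hence $\rho=\varphi^m$ is surjective, so bijective, which makes $\varphi$ bijective, and the reduction of the first paragraph finishes the argument.

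The main obstacle is bridging the gap between the \emph{local} hypothesis and the desired \emph{global} conclusion: local injectivity controls $\varphi$ only on individual neighborhoods, and a priori $\varphi$ might still fold the graph onto a proper subgraph. The device that defeats this is the idempotent power $\rho$, whose fixed image $H$ is exactly what allows the chain $d_G(v)\le d_H(v)\le d_G(v)$ to pinch to equality and thereby prove that $V(H)$ is closed under taking neighbors, at which point connectivity does the rest.
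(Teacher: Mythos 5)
Your proof is correct. Note, however, that the paper does not prove this statement at all: it is quoted as a known theorem of Ne\v{s}et\v{r}il with a citation to \cite{N71}, so there is no in-paper argument to compare against. Your argument is a clean, self-contained derivation: the reduction to surjectivity (a vertex-bijective endomorphism of a finite graph is an automorphism because the induced injection on the finite edge set is onto), the observation that powers of a locally injective endomorphism remain locally injective and that local injectivity embeds $N(v)$ into $N(\varphi(v))$, and the passage to an idempotent power $\rho=\varphi^m$ whose image $H$ is a retract. The degree-pinching chain $d_G(v)=|\rho(N(v))|\le d_H(v)\le d_G(v)$ on fixed points of $\rho$ shows $V(H)$ is closed under adjacency, and connectivity finishes it. All steps check out (finiteness of $G$, which you use in the edge-counting and in extracting the idempotent power, is the paper's standing convention and is genuinely needed, as the statement fails for infinite graphs). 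The idempotent-power device is the key idea that converts the local hypothesis into the global conclusion, and it is essentially the standard route to this classical result.
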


This allows us to prove the following simple lemma which is essential to our results on cores.

\begin{lemma}\label{lem:main}
If $G$ is a connected graph, then $G$ is a core if and only if there exists a (possibly infinite) graph $H$ such that $G \to H$ and every homomorphism from $G$ to $H$ is locally injective.
\end{lemma}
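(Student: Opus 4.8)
The plan is to prove both directions, using Theorem \ref{lem:Nes} to handle the substantive direction. The statement is an equivalence between ``$G$ is a core'' and the existence of a graph $H$ with $G \to H$ such that every homomorphism $G \to H$ is locally injective.

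For the forward direction, suppose $G$ is a core. The natural candidate is to take $H = G$ itself; certainly $G \to G$ via the identity. The task is then to show every endomorphism of $G$ is locally injective. Here I would argue by contraposition: if some endomorphism $\varphi$ of $G$ is \emph{not} locally injective, then $\varphi(u) = \varphi(v)$ for two vertices $u \ne v$ sharing a common neighbor $w$. Since $u \sim w$ and $v \sim w$, we have $\varphi(u) \sim \varphi(w)$, so $\varphi(u) = \varphi(v)$ has a neighbor in the image, and in particular $\varphi$ is not injective. A non-injective endomorphism of a finite graph $G$ maps $G$ onto a proper subgraph (its image has strictly fewer vertices), giving a homomorphism from $G$ to a proper subgraph of itself, which contradicts $G$ being a core. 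Hence every endomorphism is locally injective, and $H = G$ works.

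For the reverse direction, suppose such an $H$ exists: $G \to H$ and every homomorphism $G \to H$ is locally injective. I must show $G$ is a core, i.e.,~every endomorphism $\psi$ of $G$ is an automorphism (equivalently, $G$ admits no homomorphism to a proper subgraph). The key move is to compose: let $\varphi : G \to H$ be a fixed homomorphism and let $\psi : G \to G$ be any endomorphism. Then $\varphi \circ \psi$ is a homomorphism $G \to H$, hence locally injective by hypothesis. Since $\varphi \circ \psi$ is locally injective, $\psi$ itself must be locally injective: if $\psi(u) = \psi(v)$ for $u,v$ with a common neighbor, then $\varphi(\psi(u)) = \varphi(\psi(v))$, contradicting local injectivity of $\varphi \circ \psi$. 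Now $G$ is connected, so by Theorem \ref{lem:Nes} the locally injective endomorphism $\psi$ is an automorphism. As every endomorphism of $G$ is an automorphism, $G$ is a core.

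The main obstacle—really the only delicate point—is ensuring the composition argument in the reverse direction correctly transfers local injectivity back to $\psi$; this is where connectedness and Theorem \ref{lem:Nes} do the heavy lifting, and everything else is routine. I would also take care in the forward direction to state clearly why a non-injective endomorphism of a finite graph yields a homomorphism onto a proper (vertex-deleted) subgraph, contradicting the core property.
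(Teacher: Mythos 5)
Your proposal is correct and follows essentially the same route as the paper: take $H=G$ for the forward direction, and for the converse use the composition $\varphi\circ\psi$ together with Ne\v{s}et\v{r}il's theorem (Theorem~\ref{lem:Nes}) to conclude that every endomorphism of $G$ is an automorphism. The only cosmetic difference is that the paper argues the converse contrapositively (a non-automorphism endomorphism $\rho$ is not locally injective, so $\varphi\circ\rho$ is not locally injective), whereas you run the same implication in the direct direction; the content is identical.
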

\proof
If $G$ is a core, then set $H = G$ and we are done. Conversely, suppose $G$ is connected and not a core. Further suppose that $G \to H$. We will show that there exists a homomorphism from $G$ to $H$ that is not locally injective. Since $G$ is not a core, there exists an endomorphism $\rho$ of $G$ which is not an automorphism. By Lemma~\ref{lem:Nes} $\rho$ is not locally injective. Let $\varphi$ be any homomorphism from $G$ to $H$. It is easy to see that $\varphi \circ \rho$ is a homomorphism from $G$ to $H$ that is not locally injective.\qeds

We can apply the above in the case of $H = \mathcal{S}^d_t$ to obtain our main result relating vector colorings to cores, presented as Theorem~\ref{thm:vect2core} below. Note that a vector coloring is injective (resp.~locally injective)  if it is  injective (resp.~locally injective)  as a homomorphism to $\mathcal{S}^d_t$ for some $d \in \mathbb{N}$ and $t \ge 2$. Equivalently,  a vector coloring is (locally) injective if it does not map any two vertices (at distance two from each other) to the same vector. 

\begin{theorem}\label{thm:vect2core}
Let $G$ be a connected graph. If every optimal (strict) vector coloring of $G$ is locally injective, then $G$ is a core. In particular, if $G$ is  UVC and its  unique vector coloring is locally injective, then $G$ must be a core. 
\end{theorem}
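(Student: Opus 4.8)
The plan is to deduce the theorem from Lemma~\ref{lem:main} by exhibiting a single (infinite) target graph to which all homomorphisms from $G$ are automatically optimal vector colorings. Set $t := \chiv(G)$ and $d := |V(G)|$, and take $H := \mathcal{S}_t^d$. Since $t = \chiv(G)$, an optimal vector coloring of $G$ exists, and by the observation that any vector coloring may be realized in dimension $|V(G)|$ it can be taken to land in $\R^d$; this produces a homomorphism $G \to H$, so the hypothesis $G\to H$ of Lemma~\ref{lem:main} is satisfied.

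Next I would identify homomorphisms $G\to H$ with optimal vector colorings. By definition a homomorphism $G \to \mathcal{S}_t^d$ is exactly a vector $t$-coloring of $G$ with vectors in $\R^d$, and because $t = \chiv(G)$ every such map is an \emph{optimal} vector coloring. The hypothesis of the theorem is that every optimal vector coloring of $G$ is locally injective; hence every homomorphism $G \to H$ is locally injective. The one point to stress here is that local injectivity — no two vertices at distance two receiving a common vector — is a property independent of the ambient dimension $d$, so fixing $d = |V(G)|$ loses nothing. With both hypotheses of Lemma~\ref{lem:main} verified, we conclude that $G$ is a core. For the strict version I would run the identical argument with $t := \chisv(G)$ and with $H$ taken to be the graph on the unit vectors of $\R^d$ in which $u\sim v$ precisely when $\langle u,v\rangle = -1/(t-1)$; homomorphisms $G\to H$ are then exactly the optimal strict vector colorings, and nothing else changes.

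The ``in particular'' clause then follows immediately from the main statement. If $G$ is UVC, any two optimal vector colorings $i\mapsto p_i$ and $i\mapsto q_i$ satisfy \eqref{equalgram}, and since $\gram(p_1,\dots,p_n) = \gram(q_1,\dots,q_n)$ forces $\|p_i - p_j\| = \|q_i - q_j\|$ for all $i,j$, the two colorings coincide on exactly which pairs of vertices are assigned equal vectors. Thus local injectivity of the unique optimal vector coloring transfers to every optimal vector coloring, and the first part of the theorem applies.

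I do not expect a genuine obstacle so much as a bookkeeping subtlety: reconciling the dimension-free hypothesis (``every optimal vector coloring'') with the demand of Lemma~\ref{lem:main} for a single fixed target graph $\mathcal{S}_t^d$. The resolution is precisely the two remarks above — that $d = |V(G)|$ suffices to realize $G \to H$, and that local injectivity is invariant both under change of ambient dimension and under the congruence \eqref{equalgram} — so the quantifier over homomorphisms into the fixed $H$ matches the quantifier over optimal vector colorings.
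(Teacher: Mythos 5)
Your proposal is correct and follows essentially the same route as the paper, which proves the theorem by applying Lemma~\ref{lem:main} with $H = \mathcal{S}^d_t$ and observing that homomorphisms $G \to \mathcal{S}^d_t$ for $t = \chiv(G)$ are precisely the optimal vector colorings. Your additional remarks on fixing $d = |V(G)|$, on the equality-adjacency target for the strict case, and on local injectivity being a Gram-matrix invariant correctly fill in details the paper leaves implicit.
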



We note that  in practice,  local injectivity does not seem to be a strong restriction. In fact all of the vector colorings discussed in this paper are  injective. Furthermore, in Section \ref{subsec:2walkreg}  we  study  a class of graphs which always have locally injective vector colorings. 


\subsection{Kneser graphs are cores}\label{sec:knesercores}
Using Theorem \ref{thm:vect2core} combined with our results on unique vector colorability we now proceed to show that several graph families  are cores.

\begin{corollary}
For $n \ge 2r+1$, the  graphs $K_{n:r}$ and  $qK_{n:r}$ are cores.
\end{corollary}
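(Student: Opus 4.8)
The plan is to apply Theorem~\ref{thm:vect2core}. Recall from Section~\ref{sec:examplesUVC} that for $n\ge 2r+1$ both $K_{n:r}$ and $qK_{n:r}$ are UVC, with unique optimal vector colorings given by \eqref{eq:canonicalvc_kneser} and \eqref{eq:leasteigkneser} respectively. Since these graphs are also connected in this range (they are vertex and edge transitive with edges, and connectivity of Kneser and $q$-Kneser graphs for $n>2r$ is standard), it suffices to verify that these unique colorings are locally injective. I would in fact establish the stronger fact that they are injective, which trivially implies local injectivity.

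The key point is that the inner product of the two vectors assigned to a pair of vertices is a strictly monotone function of the size of the intersection of the corresponding $r$-subsets (respectively $r$-subspaces). For $K_{n:r}$, two $r$-subsets $S,S'$ with $|S\cap S'|=k$ satisfy
\[
\langle p_S, p_{S'}\rangle = \frac{k}{r}\cdot \frac{n/r}{n/r - 1} - \frac{1}{n/r - 1}
\]
by \eqref{eq:canonicalvckneser}, and the coefficient of $k$ is positive precisely because $n> r$. As each $p_S$ is a unit vector, $\langle p_S, p_{S'}\rangle = 1$ forces $k=r$, that is $S=S'$; hence distinct $r$-subsets receive distinct vectors. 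The argument for $qK_{n:r}$ is identical after replacing $k$ by $[k]_q$ and $n,r$ by $[n]_q,[r]_q$ in the corresponding inner product formula: since $[k]_q$ is strictly increasing in $k$, equal vectors again force equal subspaces.

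Having shown that the unique optimal vector coloring is injective, hence locally injective, Theorem~\ref{thm:vect2core} immediately yields that $K_{n:r}$ and $qK_{n:r}$ are cores. I do not expect a genuine obstacle here: the whole argument reduces to reading off the monotonicity of the inner product in the intersection parameter and checking the strict inequality $\langle p_S,p_{S'}\rangle<1$ for distinct vertices, which holds exactly because $n\ge 2r+1>r$. The only hypotheses to keep track of are connectedness of the graphs and the fact, imported from~\cite{UVC1}, that the displayed colorings are the unique optimal ones.
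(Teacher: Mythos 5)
Your proposal is correct and follows essentially the same route as the paper: import unique vector colorability from~\cite{UVC1}, observe that the explicit colorings \eqref{eq:canonicalvc_kneser} and \eqref{eq:leasteigkneser} are injective, and invoke Theorem~\ref{thm:vect2core}. The only difference is that you verify injectivity via the monotonicity of the inner product in the intersection size, where the paper simply reads it off from the vectors themselves; both checks are valid.
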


\begin{proof}
It was shown in \cite{UVC1} that for $n \ge 2r+1,$ both  $K_{n:r}$ and  $qK_{n:r}$ are UVC. Moreover, their canonical vector colorings are injective (e.g., see \eqref{eq:canonicalvc_kneser}  and \eqref{eq:leasteigkneser} in the Appendix).   The proof is concluded  using~Theorem~\ref{thm:vect2core}.
\end{proof}

As already mentioned in the introduction this result is well-known, e.g. see~\cite{HahnTardif}. Nevertheless, our proof is of independent interest as it does not rely Erd{\H o}s-Ko-Rado Theorem or   the No Homomorphism Lemma. 

The above corollary leaves open the case of the $q$-Kneser graphs $qK_{2r:r}$. In the case where $r = 2$, these graphs are transitive on non-edges, and one can use this to show that they are cores. On the other hand, we have shown computationally that $2K_{4:2}$ is \emph{not} UVC. {We conjecture} 
that the graphs $qK_{2r:r}$ are cores but are not UVC, however we have not been able to prove either claim.




\subsection{Hamming graphs are cores}\label{sec:hamming}
Consider an abelian group $\Gamma$ and inverse closed {connection set} $C \subseteq \Gamma \setminus \{0 \}$. The {\em Cayley graph}  corresponding to $\Gamma$ and $C$, denoted by ${\rm Cay}(\Gamma, C)$, has  as its vertex set the elements  of $\Gamma$ and two vertices  $a,b \in \Gamma$ are adjacent if $a - b \in C$. 

 In this section we focus on   Cayley graphs over  $\mathbb{Z}_2^n$ with group operation  bitwise XOR. We refer to the number of 1's in an element of $\mathbb{Z}_2^n$ as its \emph{weight}.
 As a  connection set we take all elements of weight $k$,  for some  fixed $k \in [n]$, which we denote by $C_{n,k}.$ Note that the  graphs ${\rm Cay}(\Z_2^n, C_{n,k})$   lie in the binary Hamming scheme, specifically they are the distance $k$-graphs of the $n$-cube. Furthermore, note  that ${\rm Cay}(\Z_2^n, C_{n,k})$  is bipartite if $k$ is odd.
  Also, if $k \ne n$ and $k$ is even, this is a non-bipartite graph with  two isomorphic components corresponding to the even and odd weight elements. 
 We  denote the component consisting of the even-weight vertices by  $\hnk$. 
 
 Our main result in this section is   that $\hnk$ is UVC for any even integer $k\in [n/2+1,n-1]$.
 Note that $\hnk$ is arc transitive, i.e., any ordered pair of adjacent vertices can be mapped to any other such pair by an automorphism of $\hnk$. Therefore, $\hnk$ is 1-walk-regular 
and thus  we can  use Theorem~\ref{thm:1walkreg} to show it  is UVC. For this, we need   to 
 determine the canonical vector coloring of $\hnk$  and show that  condition \eqref{eq:conicatinfinity} is satisfied.




As a  first  step we calculate the least  eigenvalue of $\hnk$. As ${\rm Cay}(\Z_2^n, C_{n,k})$ consists of two-isomorphic connected components, the spectrum  of $\hnk$ coincides with the spectrum  of ${\rm Cay}(\Z_2^n, C_{n,k})$  which can be calculated as follows:  The eigenvectors of a Cayley graph {for an abelian group} can be constructed using the characters of the underlying group. In particular, if $\chi$ is a character   of  $\Gamma$, then the vector  $(\chi(a))_{a \in \Gamma}$ is an eigenvector for ${\rm Cay}(\Gamma, C)$ with eigenvalue $\sum_{c \in C} \chi(c)$. Moreover, ranging over all $|\Gamma|$  characters we get a full orthogonal set of eigenvectors. For  details on the spectra of Cayley graphs see~\cite{lovaszspectra}~or~\cite{cayleyspectra}.

Recall  that the characters of $\Z_2^n$ are given by    the functions  $\chi_a(x)= (-1)^{a\cdot x}$, for all $a \in \Z^n_2$. Throughout, for   $x,y \in~\mathbb{Z}_2^n$, we denote by  $x \cdot y$  the inner product of $x$ and $y$ considered as vectors over $\mathbb{Z}_2$. We also define $x^\bot$ to be the set $\{y \in \Z^n_2 : x \cdot y = 0\}$. 
Each character $\chi_a$ corresponds to an  eigenvector $v_a\in \Z_2^n$ of ${\rm Cay}(\Z_2^n, C_{n,k})$ given~by
\begin{equation}\label{eqn:eigvecs}
v_a(x) = (-1)^{a\cdot x}, \text{ for } x\in \Z_2^n,
\end{equation}
with corresponding eigenvalue 
\begin{equation}\label{eqn:eigvalues}
\sum_{c \in C_{n,k}} (-1)^{a\cdot c} = |C_{n,k} \cap a^\bot| - |C_{n,k} \setminus a^\bot| = \binom{n}{k}-2|C_{n,k} \setminus a^\bot|.
\end{equation}
Lastly, note that 
\be\label{eq:useful}
\begin{aligned}
v_a(x)& =v_{\one+a}(x),\quad  \forall x\in V(\hnk), \text{ and } \\
 v_a(x) &=-v_{\one+a}(x),\quad  \forall x\in {\Z_2^n \setminus V(\hnk)}.
\end{aligned}
\ee

By \eqref{eqn:eigvalues} we see  that the smallest eigenvalue of ${\rm Cay}(\Z_2^n, C_{n,k})$  corresponds to  the elements  $a\in\Z^2_n$  
that maximize $|C_{n,k} \setminus a^\bot|$.   Finding the maximum value of $|C_{n,k} \setminus a^\bot|$ was already considered  by Engstr\"{o}m et al.~\cite{EFJT}. They gave and inductive proof of the bound in the theorem below, but we also need to determine when equality is attained in this bound. However, their proof can be easily modified to achieve this: simply include the claim about attainment in their induction hypothesis. Thus we have the following:

\begin{theorem}[\cite{EFJT}] \label{thm:EFJT}
For any even integer $k\in \left[{n+1\over 2},n\right]$
 we have that 
\be\label{eq:hnkupperbound}
\left|C_{n,k} \setminus a^\bot\right| \le \binom{n-1}{k-1}, \quad  \forall a \in \mathbb{Z}_2^n.
\ee
Moreover, equality is attained in \eqref{eq:hnkupperbound}  if $a$ has weight $1$ or~$n-1$. If $k \in \left[{n\over 2} +1,n-1\right]$, then these are the only elements where equality is attained.
\end{theorem}

%

Based on  Theorem~\ref{thm:EFJT} we   now compute the canonical vector coloring of~$\hnk$.

\begin{lemma}\label{lem:vector_coloring_hnk}
For  any  even integer $k\in \left[{n+1\over 2},n\right]$ we have that 
\be
\lambda_{min}(\hnk)=\frac{n-2k}{k}\binom{n-1}{k-1},\quad \text{ and }\quad  \chiv(\hnk) = \frac{2k}{2k-n}.
\ee
Furthermore, for   any  even integer $k\in \left[n/2+1,n-1\right]$, the canonical vector coloring of $\hnk$ is given by $x\mapsto p_x\in \R^n$ where 
\be\label{eq:hnkcvc}
p_x(i) = {(-1)^{x_i}\over \sqrt{n}}, \quad  \forall i\in[n]. 
\ee
\end{lemma}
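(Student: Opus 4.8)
The statement has two parts: computing $\lambda_{min}(\hnk)$ together with $\chiv(\hnk)$, and identifying the canonical vector coloring. The plan is to first extract the least eigenvalue from the eigenvalue formula \eqref{eqn:eigvalues} via the extremal bound of Theorem~\ref{thm:EFJT}, then read off $\chiv$ from Theorem~\ref{thm:1walkreg}$(i)$, and finally verify that \eqref{eq:hnkcvc} spans the least eigenspace by exhibiting the correct eigenvectors and checking the normalization.

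\textbf{Step 1: the least eigenvalue.} The eigenvalues of ${\rm Cay}(\Z_2^n, C_{n,k})$ are $\binom{n}{k} - 2|C_{n,k}\setminus a^\bot|$, so the smallest one is attained when $|C_{n,k}\setminus a^\bot|$ is maximal. By Theorem~\ref{thm:EFJT}, for even $k\in[\tfrac{n+1}{2},n]$ this maximum is $\binom{n-1}{k-1}$, attained exactly when $a$ has weight $1$ or $n-1$. I would then substitute to obtain
\[
\lambda_{min} = \binom{n}{k} - 2\binom{n-1}{k-1} = \frac{n}{k}\binom{n-1}{k-1} - \frac{2k}{k}\binom{n-1}{k-1} = \frac{n-2k}{k}\binom{n-1}{k-1},
\]
using the standard identity $\binom{n}{k}=\tfrac{n}{k}\binom{n-1}{k-1}$. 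Since the spectrum of $\hnk$ coincides with that of ${\rm Cay}(\Z_2^n,C_{n,k})$ (as noted in the text), this is also $\lambda_{min}(\hnk)$.

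\textbf{Step 2: the vector chromatic number.} Here I would apply Theorem~\ref{thm:1walkreg}$(i)$, which gives $\chiv(\hnk) = 1 - k_{\deg}/\lambda_{min}$ where $k_{\deg}=\binom{n}{k}$ is the degree. A short computation $1 - \binom{n}{k}\big/\big(\tfrac{n-2k}{k}\binom{n-1}{k-1}\big) = 1 - \tfrac{n}{n-2k} = \tfrac{-2k}{n-2k} = \tfrac{2k}{2k-n}$ yields the claimed value (for $k>n/2$ the denominator is positive, so $\chiv\ge 2$ as required).

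\textbf{Step 3: the canonical vector coloring.} By Definition~\ref{def:canonicalvc} I must produce an orthonormal basis of the least eigenspace and scale its rows by $\sqrt{N/d}$, where $N=|V(\hnk)|=2^{n-1}$ and $d=\dim$ of the least eigenspace. By Step~1 the least eigenspace of the full cube graph is spanned by the eigenvectors $v_a$ with $a$ of weight $1$ or $n-1$; by the relation \eqref{eq:useful}, $v_a$ and $v_{\one+a}$ restrict to the \emph{same} function on $V(\hnk)$, so the $n$ vectors of weight $1$ and the $n$ vectors of weight $n-1$ pair up, giving $d=n$ candidate eigenvectors on $\hnk$. For $a=e_i$ (weight $1$), $v_{e_i}(x)=(-1)^{x_i}$, which is exactly $\sqrt{n}\,p_x(i)$ up to the coordinate indexing in \eqref{eq:hnkcvc}. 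I would verify these $n$ restricted vectors are orthogonal of equal norm (the inner product of the $i$-th and $j$-th coordinate functions over even-weight $x$ vanishes for $i\ne j$ by a parity/character-sum argument), normalize, and then check that the resulting row-vectors are precisely $p_x$ after the scaling $\sqrt{N/d}=\sqrt{2^{n-1}/n}$ is absorbed.

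\textbf{Main obstacle.} The delicate point is Step~3: confirming that the least eigenspace of $\hnk$ has dimension exactly $n$ and that the coordinate functions $x\mapsto(-1)^{x_i}$ form an orthogonal basis for it when restricted to even-weight vertices. The restriction in \eqref{eq:useful} collapses the $2n$ eigenvectors of the cube into $n$ on each component, so one must check no further degeneracies occur and that these restrictions remain linearly independent and mutually orthogonal on $V(\hnk)$. Once the orthogonality sum $\sum_{x:\,|x|\text{ even}}(-1)^{x_i+x_j}=0$ for $i\ne j$ is established (a direct character computation), the normalization constant falls out and matches $1/\sqrt{n}$, completing the identification.
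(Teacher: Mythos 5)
Your proposal is correct and follows essentially the same route as the paper: least eigenvalue via \eqref{eqn:eigvalues} and Theorem~\ref{thm:EFJT}, $\chiv$ via Theorem~\ref{thm:1walkreg}$(i)$, and the canonical coloring by restricting the $2n$ least eigenvectors $v_{e_i}, v_{\one+e_i}$ of the full Cayley graph to the even-weight component and normalizing. The only cosmetic difference is in establishing orthogonality of the restricted vectors: you propose a direct character sum over even-weight $x$, whereas the paper adds the two orthogonality relations $\langle v_{e_i},v_{e_j}\rangle=0$ and $\langle v_{\one+e_i},v_{\one+e_j}\rangle=0$ to conclude $\langle x_i,x_j\rangle=0$; both work.
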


\begin{proof}
As previously noted, the least   eigenvalue of $\hnk$ is equal to  the least   eigenvalue of $ {\rm Cay}(\Z_2^n, C_{n,k})$. The latter    
  is equal to  $\frac{n-2k}{k}\binom{n-1}{k-1}$  by   \eqref{eqn:eigvalues}  and   Theorem~\ref{thm:EFJT}. 
 Furthermore, as $\hnk$ is 1-walk-regular,  Theorem~\ref{thm:1walkreg} $(i)$  implies~$\chiv(\hnk)=\frac{2k}{2k-n}.$

Next, consider an   even integer $k\in \left[n/2+1,n-1\right]$. By Theorem \ref{thm:EFJT},  the least eigenvalue of $ {\rm Cay}(\Z_2^n, C_{n,k})$ has  multiplicity $2n$. In particular,   a set  of orthogonal   eigenvectors  is  given by $\{v_{e_i}\}_{i=1}^n\cup~\{v_{\one+e_i}\}_{i=1}^n$.  For all $i\in [n]$ write  $v_{e_i}$  as $(x_i, y_i)^\trans$ where $x_i$ is the restriction of $v_{e_i}$ on $V(\hnk)$ and $y_i$ its restriction on $\Z_2^n \setminus V(\hnk)$. Using \eqref{eq:useful} it follows that  $v_{\one+e_i}=(x_i,-y_i)$ for all $i\in [n]$. As $\langle v_{e_i},v_{e_j}\rangle =\langle v_{e_i},  v_{\one+e_j}\rangle=0,$ for all  $i\ne j$, 
 the vectors $\{x_i\}_{i=1}^n$ are pairwise orthogonal. Furthermore, note  that the multiplicity of $\lambda_{min}(\hnk)$ as an eigenvalue of $\hnk$ is $n$ (because its multiplicity as an eigenvalue of $ {\rm Cay}(\Z_2^n, C_{n,k})$ is~$2n$). Thus, the vectors $\left\{ {x_i\over \sqrt{2^{n-1}}}: i\in [n] \right\}$ form  an orthonormal basis of the least eigenspace of  $\hnk$. Lastly, according to Definition \ref{def:canonicalvc}, to construct the canonical vector coloring of $\hnk$ we consider the vectors $\left\{ {x_i\over \sqrt{2^{n-1}}}: i\in [n] \right\}$ as columns of a matrix and then we scale  its rows  by $\sqrt{{2^{n-1}\over n}}$. This shows that the canonical vector coloring  of $\hnk$ is  given by~\eqref{eq:hnkcvc}.
\end{proof}

Lastly, to show that $\hnk$ is UVC, we must show that its canonical  vector coloring 
satisfies \eqref{eq:conicatinfinity}.
This is   accomplished in the following lemma.

\begin{lemma}\label{lem:condvi}Let $x\mapsto p_x\in \R^n$ be the canonical vector coloring of $\hnk$. 
Then, for any  $n \times n$ symmetric matrix $R$ we have that 
 \be\label{eq:fwefefe}
 p_x^\trans R p_y = 0, \text{ for all }  x \simeq y \Longrightarrow R = 0.
 \ee
 
\end{lemma}
\begin{proof}
Since  ${\rm span}\{p_x: x\in V(\hnk)\}=\R^n$ we just need to  show that $Rp_x=~0$, for all  $x\in V(\hnk).$ For this consider   the subspace
\be 
V_x:= \spn\{p_y : y \simeq x\},
\ee
and note  that the hypothesis of~\eqref{eq:fwefefe} can be equivalently expressed as $Rp_x\in V_x^\perp$, for all $x\in V(\hnk)$.
Thus, if we can show that $V_x=\R^n$, for all $x\in V(\hnk)$, we get from  Equation~\eqref{eq:fwefefe} that  $Rp_x = 0$ for all $x\in V(\hnk)$, and we are  done.


 We first consider the case of $V_x$ when $x = \zero$, the vector of all zeros in $\Z^n_2$. 
The neighbors of $\zero$ are all the vectors of weight $k$ in $\Z_2^n$. For each pair of distinct $i,j \in [n]$, there exist weight $k$ vectors $y,z \in \Z_2^n$ such that $e_i-e_j=\frac{\sqrt{n}}{2}(p_y - p_z)$. The vectors $y$ and $z$ can be chosen by picking any two weight $k$ vectors that differ only in positions $i$ and $j$. Therefore, $e_i-e_j\in V_\zero$ for all $i,j \in [n]$.
Since ${\rm span}\{e_i - e_j : i \ne j\}={\rm span}(\one)^\perp$, we have that ${\rm span}(\one)^\perp\subseteq V_\zero$. Lastly, 
 as $\one = \sqrt{n}  p_\zero \in V_\zero$ (recall \eqref{eq:hnkcvc}) it follows that  $V_\zero = \R^n$. 

Next, consider an arbitrary $x \in \Z^n_2$. Note that $V_x={\rm Diag}(p_x)V_\zero$, where 
$\text{Diag}(p_x)$
 is  the diagonal matrix with  entries corresponding to  $p_x$. 
As  $V_\zero = \R^n$ and the matrix $\text{Diag}(p_x)$ is  invertible, we have that $V_x = \R^n$, for all $x \in~\Z^n_2$. 
\end{proof}

Putting everything together we get:
\begin{theorem}\label{thm:hnk_is_uvc}
The graph $\hnk$ is~UVC for any even integer $k\in [n/2+1,n-1]$.
\end{theorem}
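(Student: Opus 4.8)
The plan is to show that $\hnk$ is UVC by verifying the sufficient condition of Theorem~\ref{thm:1walkreg}$(ii)$, which is exactly what the two preceding lemmas have set up. Since $\hnk$ is arc transitive, it is 1-walk-regular, so Theorem~\ref{thm:1walkreg} applies. To invoke part $(ii)$ of that theorem, I need to exhibit the canonical vector coloring of $\hnk$ and then check that condition~\eqref{eq:conicatinfinity} holds, i.e., that $p_x^\trans R p_y = 0$ for all $x \simeq y$ forces $R = 0$. Recall that the notation $i \simeq j$ in Theorem~\ref{thm:1walkreg} means $i = j$ or $i \sim j$, so this is precisely the implication~\eqref{eq:fwefefe}.

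First I would invoke Lemma~\ref{lem:vector_coloring_hnk}, which for any even integer $k \in [n/2+1, n-1]$ identifies the canonical vector coloring of $\hnk$ as $x \mapsto p_x \in \R^n$ given by~\eqref{eq:hnkcvc}. This is the coloring whose uniqueness we wish to establish. Next I would invoke Lemma~\ref{lem:condvi}, which states precisely that this canonical vector coloring satisfies the implication~\eqref{eq:fwefefe}, namely that $p_x^\trans R p_y = 0$ for $x = y$ or $x \sim y$ implies $R = 0$ for every symmetric matrix $R$. Since the hypothesis of Lemma~\ref{lem:condvi} is valid exactly on the same range $k \in [n/2+1, n-1]$, the two lemmas combine without any additional work.

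With both ingredients in hand, the conclusion is essentially immediate: condition~\eqref{eq:fwefefe} is nothing other than condition~\eqref{eq:conicatinfinity} of Theorem~\ref{thm:1walkreg}$(ii)$ specialized to the canonical vector coloring of $\hnk$. Applying Theorem~\ref{thm:1walkreg}$(ii)$ then yields that $\hnk$ is uniquely vector colorable for every even integer $k \in [n/2+1, n-1]$, which is exactly the assertion of Theorem~\ref{thm:hnk_is_uvc}.

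In truth there is no real obstacle left at this stage; the proof is a one-line assembly of Lemma~\ref{lem:vector_coloring_hnk} and Lemma~\ref{lem:condvi} via Theorem~\ref{thm:1walkreg}. All the genuine content has already been discharged in the two preceding lemmas. The main difficulty was the spanning argument inside Lemma~\ref{lem:condvi}—showing $V_\zero = \R^n$ by realizing each difference $e_i - e_j$ as a scalar multiple of $p_y - p_z$ for suitable weight-$k$ neighbors $y, z$ of $\zero$, and then transporting this to arbitrary $x$ via the invertible diagonal map $\mathrm{Diag}(p_x)$. Given those lemmas, the theorem itself requires only that I state the combination explicitly and cite the relevant range of $k$.
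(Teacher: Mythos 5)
Your proposal is correct and matches the paper's own (implicit) argument exactly: the paper likewise presents Theorem~\ref{thm:hnk_is_uvc} as the immediate combination of Lemma~\ref{lem:vector_coloring_hnk} (identifying the canonical vector coloring) and Lemma~\ref{lem:condvi} (verifying condition~\eqref{eq:conicatinfinity}) via Theorem~\ref{thm:1walkreg}$(ii)$, with all the substantive work done in those lemmas. Nothing is missing.
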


It is worth noting that Theorem \ref{thm:hnk_is_uvc} does  not hold for all even values of $k$. {It is not difficult to show that} for $n = 2k-1$, the weight two elements of $\mathbb{Z}_2^n$ also give eigenvectors corresponding to the least eigenvalue of $\hnk$. Moreover, {one can} show that these eigenvectors can be used to construct a different optimal vector coloring of $\hnk$.  Therefore $\hnk$ is not uniquely vector colorable for $n = 2k-1$ for any even $k$.

 The canonical vector coloring of $\hnk$ given in \eqref{eq:hnkcvc} is  injective (so in particular it is locally injective). Combining Theorem \ref{thm:hnk_is_uvc} and Theorem \ref{thm:vect2core} we~get:

\begin{corollary}
The graph $\hnk$ is a~core for any even integer $k\in [n/2+1,n-1]$.
\end{corollary}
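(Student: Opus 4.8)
The plan is to deduce the corollary directly from Theorem~\ref{thm:vect2core}, which reduces the claim that $\hnk$ is a core to checking three things: that $\hnk$ is connected, that it is UVC, and that its unique optimal vector coloring is locally injective. The second ingredient is already established by Theorem~\ref{thm:hnk_is_uvc}, which gives unique vector colorability precisely in the stated range of even $k \in [n/2+1,n-1]$. Hence the only genuine work left is to confirm connectedness and local injectivity of the canonical vector coloring $x \mapsto p_x$ of \eqref{eq:hnkcvc}.

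For connectedness, I would simply invoke the definition: $\hnk$ is the even-weight \emph{component} of ${\rm Cay}(\Z_2^n, C_{n,k})$, and a connected component is by construction connected, so this hypothesis of Theorem~\ref{thm:vect2core} is satisfied automatically. For local injectivity, the key observation is that any \emph{injective} map is \emph{a fortiori} locally injective, so it suffices to show the canonical vector coloring is injective. This is immediate from the explicit formula: the coloring sends $x$ to the vector with $i$-th entry $(-1)^{x_i}/\sqrt{n}$, so if $x \ne y$ then they differ in some coordinate $i \in [n]$, giving $(-1)^{x_i} \ne (-1)^{y_i}$ and hence $p_x \ne p_y$. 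Thus distinct vertices are assigned distinct vectors, as already noted in the sentence preceding the statement.

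There is no substantive obstacle here: the corollary is a one-line consequence of the two main theorems. Combining the injectivity (and therefore local injectivity) of the canonical vector coloring with the unique vector colorability of $\hnk$ from Theorem~\ref{thm:hnk_is_uvc}, an application of Theorem~\ref{thm:vect2core} yields that $\hnk$ is a core for every even $k \in [n/2+1,n-1]$.
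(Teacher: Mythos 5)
Your proposal is correct and follows the paper's own argument exactly: the paper likewise observes that the canonical vector coloring of $\hnk$ in \eqref{eq:hnkcvc} is injective by inspection and then applies Theorem~\ref{thm:vect2core} together with Theorem~\ref{thm:hnk_is_uvc}. Your explicit checks of connectedness and of injectivity via the coordinate formula are just slightly more spelled-out versions of what the paper leaves implicit.
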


For even $k < n/2 + 1$, the situation is unclear, but a few special cases are settled. For instance, for $k = 2$ it is known that $\hnk$ is a core if and only if $n$ is not a power of two. Also, if $k = n/2$ then the vertices $x$ and $\one + x$ have the same neighborhood and thus $\hnk$ is not a core in this case. We noted above that for $n = 2k-1$ the graph $\hnk$ is never UVC. In this case $\hnk$ may or may not be a core. In particular, by the above argument we see that $\hnk$ is a core when $k = 2$ and $n = 5$, however by direct computations we have found that the core of $H_{7,4}$ is the complete graph on 8 vertices.

\subsection{2-walk-regular graphs}\label{subsec:2walkreg}

A graph $G$ is said to be \emph{2-walk-regular} if it is 1-walk-regular with the additional property that, for all $k \in \mathbb{N}$, the number of walks of length $k$ with initial and final vertices at distance two from each other does not depend on the specific pair of vertices. In this section we show  that, with a few simple exceptions, any uniquely vector colorable 2-walk-regular graph must be a core.

To show this we need to define the \emph{distance 2-graph} of a graph $G$. This is the graph with vertex set $V(G)$ in which two vertices are adjacent if they are at distance 2 in $G$. We  denote this graph as $G_2$. Using this notion we can give another definition of 2-walk-regular graphs: a graph $G$ is 2-walk-regular if it is 1-walk-regular and there exist numbers $c_k$ for all $k \in \mathbb{N}$ such that $A^k \circ A_2 = c_k A_2$, where $A_2$ is the adjacency matrix of $G_2$.

The following lemma gives  a relationship between a graph and its distance 2-graph which we  need for the main result of this section.

\begin{lemma}\label{lem:dist2comps}
Let $G$ be a connected graph. The components of $G_2$ induce independent sets in $G$ if and only if $G$ is bipartite or complete multipartite.
\end{lemma}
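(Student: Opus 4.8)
The plan is to prove the two implications separately, with the reverse direction being routine and the forward direction carrying all the weight. I will write $\sim$ for adjacency in $G$, $N(v)$ for a neighbourhood, and abbreviate the hypothesis as (H): every component of $G_2$ induces an independent set in $G$.

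For the ($\Leftarrow$) direction I would argue by cases. If $G$ is bipartite with parts $X,Y$, then any two vertices at distance $2$ lie on the same side (distances within a side are even), so every component of $G_2$ is contained in $X$ or in $Y$; as $X$ and $Y$ are independent in $G$, so is each $G_2$-component. If $G$ is complete multipartite with parts $V_1,\dots,V_m$, then two distinct vertices are at distance $2$ precisely when they are non-adjacent (hence in a common part) and some other part is non-empty; thus each $G_2$-component is contained in a single part $V_i$, which is independent. The degenerate case $G=K_1$ is immediate.

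For ($\Rightarrow$) I would assume (H) and show that if $G$ is not bipartite then it is complete multipartite. First I would extract three structural consequences of (H). (i) \emph{Local structure:} for every vertex $v$, two neighbours of $v$ are non-adjacent if and only if they lie in the same $G_2$-component (non-adjacent neighbours share $v$, so are at distance $2$ and hence $G_2$-adjacent; conversely same-component vertices are non-adjacent by (H)); consequently $N(v)$ induces a \emph{complete multipartite} graph. (ii) \emph{Distance-two independence:} every vertex at distance exactly $2$ from a fixed $r$ is $G_2$-adjacent to $r$, hence lies in the component of $r$, so the whole distance-$2$ set of $r$ is independent. (iii) \emph{No long odd holes:} an induced odd cycle of length $\ge 5$ is chordless, so stepping around it by two places produces $G_2$-edges that put all its vertices into one $G_2$-component which also contains the cycle's edges, contradicting (H). Since the shortest odd cycle of a non-bipartite graph is induced, (iii) forces it to be a triangle, and then (i) lets one propagate along edges (a neighbour of a triangle vertex cannot be non-adjacent to both of the other two) to conclude that \emph{every} vertex of $G$ lies in a triangle.

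The crux, and the step I expect to be the main obstacle, is to deduce $\operatorname{diam}(G)\le 2$ from (H) together with non-bipartiteness. I would argue by contradiction from a geodesic $u\sim x\sim y\sim w$ of length three, choosing a triangle $\{w,e,f\}$ through the far endpoint $w$. Applying (ii) at $x$ (where $w$ sits at distance $2$) forces $d(x,e),d(x,f)\in\{1,3\}$, and since $e\sim f$ both values must be equal. If both equal $3$, then $e$ and $f$ both lie at distance $2$ from $y$ while being adjacent, contradicting (ii); if both equal $1$, then $u,e,f\in N(x)$ with $e\sim f$ but $u\not\sim e$ and $u\not\sim f$ (an edge from $u$ to $e$ or $f$ would give $d(u,w)\le 2$), an induced edge-plus-isolated-vertex inside $N(x)$, contradicting the complete multipartiteness of (i). Once $\operatorname{diam}(G)\le 2$ is in hand, every non-adjacent pair is at distance exactly $2$, so for distinct vertices with $u\not\sim v$ and $v\not\sim w$ both $u$ and $w$ lie in the distance-$2$ set of $v$, whence $u\not\sim w$ by (ii); that is, non-adjacency is transitive, which is exactly the assertion that $G$ is complete multipartite. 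The only delicate points are the case bookkeeping in the diameter argument and checking that the vertices named there are genuinely distinct, but no computation is involved.
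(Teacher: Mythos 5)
Your proof is correct, and it takes a genuinely different route from the paper's. The paper colors $V(G)$ by the components of $G_2$, proves as its key lemma that every \emph{shortest path} in $G$ uses exactly two colors which alternate along the path, derives from this that differently-colored neighbors of a vertex are adjacent and that every vertex has a neighbor of every other color, and then refutes a hypothetical non-adjacent pair in distinct color classes by forcing a third color onto a shortest path between them. You instead extract the local facts that each neighborhood induces a complete multipartite graph and that each distance-two sphere is independent, use non-bipartiteness through odd cycles to plant a triangle at every vertex (the paper never invokes triangles, using instead that there are at least three color classes), push the diameter down to $2$ by a case analysis on a geodesic of length three with a triangle at its far end, and finish by observing that non-adjacency is transitive. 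The two arguments are of comparable length and difficulty; the paper's buys the clean structural statement about two-colored geodesics, while yours buys the intermediate facts that $G$ has diameter at most $2$ and that every vertex lies in a triangle, and its endgame (transitivity of non-adjacency once $\operatorname{diam}(G)\le 2$) is arguably the more transparent characterization of complete multipartite graphs. The distinctness checks you flag in the diameter argument (e.g.\ $u\ne e,f$ because $d(u,w)=3$, and $e,f\ne x,y$ in the relevant cases because of the computed distances) all go through, so there is no gap.
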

\proof
It is easy to see that if $G$ is a connected bipartite or complete multipartite graph, then the components of $G_2$ induce independent sets in $G$.

To see the converse suppose that $G$ is connected, not bipartite and the components of $G_2$ induce independent sets in $G$. We  show that $G$ must be complete multipartite. Let $D_1, \ldots, D_k$ be the vertex sets of the components of $G_2$. Since these are independent sets in $G$, coloring vertices in $D_j$ with color $j$ gives a proper coloring of $G$. Since $G$ is not bipartite, we have that $k \ge 3$.

 We show that any shortest path  in $G$ only contains two colors and these alternate along the path.
Indeed, consider a shortest path in $G$ which contains  three or more colors. Note that consecutive vertices receive different colors (as color classes are independent sets) and thus, there  exist three consecutive vertices on this path with distinct colors. However this is a contradiction, since the first and last of these three vertices would be at distance two, and must therefore receive  the same color (as they lie in the same component of $G_2$).

This fact has two useful consequences. First, 
 if a vertex has two neighbors of distinct colors, then they must be adjacent.
   Second,  every vertex has a neighbor of every color other than its own. To see this let $v\in V(G)$ and consider another vertex $u$ with a different color. Then the  neighbor of $v$ on the shortest path from  $v$  to $u$ (this exists as $G$ is connected) 
   has the required property. 
   

Lastly,  towards a contradiction suppose that $G$ is not complete multipartite. This implies there must be two vertices $u_1$ and $u_2$ in different color classes  (say colored 1 and 2 respectively)  that are not adjacent. Since they are not adjacent, $u_1$ and $u_2$ must be at distance at least two. However, by the above, any shortest path between them alternates colors and therefore they must be at distance at least three. Furthermore, by considering the fourth vertex on this path, we can assume that $u_1$ and $u_2$ are at distance exactly three. Therefore, there exist vertices $v_1$ and $v_2$ such that $u_1 \sim v_2 \sim v_1 \sim u_2$ (note that the subscripts of these vertices correspond to their colors). By the above, $v_2$ has a neighbor $w$ of color $3.$ Since $w$ and $u_1$ are vertices of different colors in the neighborhood of $v_2$, by the previous paragraph $w$ and $u_1$ must be adjacent. Similarly, $w$ and $v_1$ are adjacent. This leads to a contradiction. 
\qeds

Using the above lemma, we are able to show that the canonical vector coloring of a 2-walk-regular graph is always locally injective.

\begin{lemma}\label{cdsfvweve}
Let $G$ be a connected 2-walk-regular graph that is not bipartite or complete multipartite. The canonical vector coloring of $G$ is locally injective.
\end{lemma}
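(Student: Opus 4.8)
The goal is to prove that the canonical vector coloring $x \mapsto p_x$ of a connected $2$-walk-regular graph $G$ (not bipartite or complete multipartite) is locally injective, meaning $p_u \ne p_v$ whenever $u$ and $v$ share a common neighbor, i.e.\ whenever they lie at distance two. The natural plan is to argue by contradiction: suppose $p_u = p_v$ for two vertices $u,v$ at distance two in $G$, and derive a structural consequence that, via Lemma~\ref{lem:dist2comps}, forces $G$ to be bipartite or complete multipartite.

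\textbf{The plan.} First I would exploit $2$-walk-regularity to show that the event ``$p_u = p_v$ for a pair at distance two'' is an all-or-nothing phenomenon across each component of $G_2$. Recall that the Gram matrix of the canonical vector coloring is $(n/d)$ times the orthogonal projection $E$ onto the least eigenspace of $G$, so $\langle p_u, p_v\rangle = \tfrac{n}{d}E_{uv}$. Since all $p_x$ are unit vectors, $p_u = p_v$ is equivalent to $\langle p_u, p_v\rangle = 1$, i.e.\ $E_{uv} = E_{uu} = d/n$. Now $2$-walk-regularity says $A^k \circ A_2 = c_k A_2$ for all $k$, and since $E$ is a polynomial in $A$ (it is a spectral projector), we get $E \circ A_2 = \gamma A_2$ for some constant $\gamma$. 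This means $E_{uv}$ takes the \emph{same} value $\gamma$ for \emph{every} pair $u,v$ at distance exactly two. Hence if $p_u = p_v$ for one distance-two pair, then $E_{uv} = d/n$ for \emph{all} distance-two pairs, so $p_u = p_v$ for every pair at distance two.

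\textbf{From the all-or-nothing property to the contradiction.} Having established that $p_u = p_v$ for all pairs at distance two, I would next argue that the entire vector-coloring image is constant on each connected component of the distance-two graph $G_2$. Indeed, if $u$ and $v$ lie in the same component of $G_2$, there is a path $u = w_0, w_1, \ldots, w_\ell = v$ in $G_2$, and $p_{w_{i}} = p_{w_{i+1}}$ for each consecutive pair (each being at distance two in $G$), so $p_u = p_v$ by transitivity. Consequently the vertices in any component of $G_2$ all receive the same vector. Since the canonical vector coloring is a \emph{strict} optimal vector coloring (Theorem~\ref{thm:1walkreg}(i)), adjacent vertices of $G$ satisfy $\langle p_x, p_y\rangle = -1/(\chiv(G)-1) < 1$, so two vertices with the same image cannot be adjacent in $G$. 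Therefore each component of $G_2$ induces an independent set in $G$, and Lemma~\ref{lem:dist2comps} forces $G$ to be bipartite or complete multipartite, contradicting the hypothesis.

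\textbf{Where the work concentrates.} The two substantive ingredients are (a) deducing $E \circ A_2 = \gamma A_2$ from $2$-walk-regularity, and (b) the reduction $p_u = p_v \iff E_{uv} = d/n$ together with the transitivity/connectivity argument across $G_2$. Step (a) is the crux: I need $E$, the projection onto the least eigenspace, to be expressible as a polynomial in $A$ so that the hypotheses $A^k \circ A_2 = c_k A_2$ transfer to $E$; this is standard since a spectral projector is $p(A)$ for an interpolating polynomial $p$, and Schur multiplication by $A_2$ is linear. I anticipate the main obstacle is simply making the spectral-projector-as-polynomial argument clean and confirming that the constant value $\gamma$ equals $d/n$ exactly in the degenerate case, rather than merely being some fixed constant; but since unit-norm forces $\langle p_u,p_v\rangle \le 1$ with equality iff the vectors coincide, a single coincident pair pins down $\gamma = d/n$ and the rest follows formally.
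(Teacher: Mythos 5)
Your proposal is correct and follows essentially the same route as the paper's proof: both use that the spectral projector onto the least eigenspace is a polynomial in $A$, so 2-walk-regularity forces $\langle p_u,p_v\rangle$ to be constant over distance-two pairs, whence a single coincidence propagates to every component of $G_2$ and contradicts Lemma~\ref{lem:dist2comps} because adjacent vertices must receive vectors with negative inner product. The only difference is cosmetic (you phrase the final step via the contrapositive of the lemma rather than extracting an adjacent pair from a $G_2$-component), so there is nothing substantive to add.
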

\begin{proof}
Let $i \mapsto p_i$ be the canonical vector coloring of $G$. Recall that the  Gram matrix of this vector coloring is a scalar multiple of the projection, $E_\tau$, onto the eigenspace of $G$ corresponding to its least eigenvalue. Since $E_\tau$ is a polynomial in the adjacency matrix of $G$ and $G$ is 2-walk-regular, there exists a real number $c$ such that $E_\tau \circ A_2 = cA_2$, where $A_2$ is the adjacency matrix of $G_2$. Therefore, $\langle p_i, p_j\rangle $ is constant for all vertices $i$ and $j$ at distance 2 in $G$.

Suppose that  $i \mapsto p_i$ is not locally injective. Then there exist $i,j \in~V(G)$ that are at distance two in $G$ such that $p_i = p_j$. This means that $\langle p_i, p_j\rangle  = 1$, and by the  argument in the first paragraph  this implies that any pair of vertices at distance two are mapped to the same vector. Therefore, the vertices in a single component of $G_2$ are all mapped to the same vector. However,  by Lemma~\ref{lem:dist2comps} and the assumption,  $G_2$ has a component which contains a pair of adjacent vertices,  and this pair of vertices cannot be mapped to the same vector since their inner product must be negative. This gives a contradiction and proves the~theorem.
\end{proof}

The following theorem is a direct consequence of Lemma \ref{cdsfvweve}.

\begin{theorem}\label{thm:2walkreg}
Let $G$ be a 2-walk-regular, non-bipartite, and not complete multipartite graph. If $G$ is uniquely vector colorable, then $G$ is a core.
\end{theorem}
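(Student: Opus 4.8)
The plan is to combine Lemma~\ref{cdsfvweve} with the ``in particular'' clause of Theorem~\ref{thm:vect2core}, so the core of the argument is a one-line substitution. First I would observe that a 2-walk-regular graph is by definition 1-walk-regular, so Theorem~\ref{thm:1walkreg}$(i)$ guarantees that $G$ has a well-defined vector chromatic number and that its canonical vector coloring is an optimal (strict) vector coloring. Since $G$ is assumed to be uniquely vector colorable, every optimal vector coloring is congruent to the canonical one; in particular the unique optimal vector coloring of $G$ is (a congruent copy of) the canonical vector coloring.

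Next I would invoke Lemma~\ref{cdsfvweve}: under the hypotheses that $G$ is connected, 2-walk-regular, non-bipartite and not complete multipartite, the canonical vector coloring is locally injective. Crucially, local injectivity depends only on the Gram matrix of the coloring --- it asks merely whether $\langle p_i - p_j, p_i - p_j\rangle = 0$ for some pair $i,j$ at distance two --- so it is preserved under congruence and therefore also holds for the unique optimal vector coloring of $G$. Applying the second sentence of Theorem~\ref{thm:vect2core}, a connected UVC graph whose unique vector coloring is locally injective is a core, which is exactly the desired conclusion.

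The only genuine gap is the connectivity hypothesis, which both Lemma~\ref{cdsfvweve} and Theorem~\ref{thm:vect2core} require but which is not stated explicitly in the theorem. I would close this by arguing that the remaining hypotheses force connectedness: a 1-walk-regular graph is regular, so a disconnected one is a disjoint union of mutually isomorphic regular components, and for such a graph one may rotate the optimal vector coloring of one component independently of the others, producing optimal colorings with distinct Gram matrices. This contradicts unique vector colorability, so any UVC graph meeting the hypotheses is automatically connected; non-bipartiteness moreover ensures $G$ has at least one edge, so the coloring is non-trivial. Verifying that the connectivity implicitly demanded by the cited results is supplied by the UVC hypothesis is the main (albeit minor) obstacle --- everything else is a direct application of Lemma~\ref{cdsfvweve} feeding into Theorem~\ref{thm:vect2core}.
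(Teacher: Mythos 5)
Your proposal is correct and matches the paper's argument: the paper derives Theorem~\ref{thm:2walkreg} as a direct consequence of Lemma~\ref{cdsfvweve} fed into Theorem~\ref{thm:vect2core}, and it likewise remarks that connectivity need not be assumed because it is implied by unique vector colorability. Your additional observation that local injectivity is a Gram-matrix property (hence transfers from the canonical coloring to any congruent copy) is a reasonable explicit justification of a step the paper leaves implicit.
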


Note that we do not need to assume that $G$ is connected in Theorem~\ref{thm:2walkreg}  since this is implied by unique vector colorability. Examples of 2-walk-regular graphs include 2-arc-transitive graphs, distance regular graphs, and in particular strongly regular graphs, which we  focus on in Section~\ref{sec:SRGcomps}.

\subsection{Taylor graphs}\label{sec:Taylor}


A connected graph $G$ of diameter $d$ is \emph{distance regular} if there exist numbers $p_{ij}^k$ for $i,j,k = 0,1, \ldots, d$ such that for any pair of vertices $u,v$ at distance $k$ from each other, the number of vertices $w$ at distance $i$ from $u$ and distance $j$ from $v$ is equal to $p_{ij}^k$. This turns out to be equivalent to the existence of numbers $b_0, \ldots, b_{d-1}$ and $c_1, \ldots, c_d$ such that for any vertices $u,v$ at distance $i$ in $G$, the number of neighbors of $v$ at distance $i+1$ from $u$ is $b_i$ and the number of neighbors of $v$ at distance $i-1$ from $u$ is $c_i$. The array $\{b_0, \ldots, b_{d-1}; c_1, \ldots, d_d\}$ is known as the \emph{intersection array} of $G$ and it characterizes many of its properties, such as the eigenvalues of $G$ and the numbers $p_{ij}^k$ from above. Also note that the number $b_0$ is the valency of $G$.

Another useful property of a distance regular graph $G$ is that the span of the adjacency matrices of its distance graphs is equal to the algebra of polynomials of its adjacency matrix $A$. This implies that any polynomial in $A$ is constant on entries corresponding to pairs of vertices at some fixed distance (similar to 1- and 2-walk-regularity, but for any distance), and that the adjacency matrices of its distance graphs are polynomials in $A$. For a detailed account of distance regular graphs we refer the reader to~\cite{BCN}.

A \emph{Taylor graph} is a distance regular graph whose  intersection array  is given by $\{k,\mu, 1; 1, \mu ,k\}$, thus they have diameter three. 
 Examples of (non-bipartite) Taylor graphs include 
the icosahedral graph and the Gosset graph. Moreover, given any strongly regular graph $G$ with parameters $(v,k,a,c)$ (see Section~\ref{sec:SRGcomps} for definition) where $k = 2c$, one can construct a non-bipartite Taylor graph as follows: Take two copies $G_1$ and $G_2$ of $G$, and add an edge between a vertex $u$ of $G_1$ and vertex $v$ of $G_2$ if the corresponding vertices of $G$ were distinct and non-adjacent. Finally, add a vertex adjacent to every vertex of $G_1$ and a vertex adjacent to every vertex of $G_2$. This will be a Taylor graph on $2v+2$ vertices with intersection array $\{v,v-k-1,1;1,v-k-1,v\}$.

The parameters of a Taylor graph  imply the number of vertices at distance 1,2, and 3 from a given vertex is $k$, $k$, and 1, respectively. Thus a Taylor graph has $2k+2$ vertices and every vertex has a unique vertex at distance three from~it. We  refer to such pairs as antipodes. Note that the antipode of a vertex $u$ is adjacent to every vertex at distance two from $u$. 
We will show that every Taylor graph is UVC and thus a core unless it is bipartite. First, we need to prove the following~lemma.

\begin{lemma}\label{lem:antipode}
Let $G$ be a Taylor graph. Then, in the canonical vector coloring of $G$, pairs of vertices at distance three are assigned antipodal vectors.
\end{lemma}
\proof
Let $A$ be the adjacency matrix of $G$ and $A_3$ the adjacency matrix of the distance $3$-graph of $G$, denoted $G_3$. Note that $G_3$ is isomorphic to the disjoint union of some number of $K_2$ graphs, and therefore has only two eigenvalues: $1$ and $-1$. Let $E_\tau$ be the projection onto the $\tau$-eigenspace of $G$ where $\tau$ is its least eigenvalue. For $u \in V(G)$ let $p_u$ be the vector assigned to $u$ in the canonical vector coloring of $G$. Recall that $E_\tau$ is a scalar multiple of the Gram matrix of the $p_u$. Let $d$ be the dimension of the $\tau$-eigenspace. Then $\tr(E_\tau) = d$, since the trace of a projection is equal to its rank. Furthermore, since $G$ is distance regular, all polynomials in $A$ have constant diagonal, and so all of the diagonal entries of $E_\tau$ must be equal to $d/n$, where $n$ is the number of vertices of $G$. We will show that the entries of $E_{\tau}$ corresponding to pairs of vertices at distance three are equal to $-d/n$, which will imply that vectors assigned to such pairs in the canonical vector coloring are antipodal.

Since $A_3$ is a polynomial in $A$, we have that $A_3E_\tau = \lambda E_\tau$ where $\lambda$ is some eigenvalue of $G_3$, i.e., is $\pm 1$. We will show that $\lambda = -1$. To do this, it suffices to show that any $\tau$-eigenvector of $G$ is a $-1$-eigenvector of $G_3$. Suppose that $z$ is a $\tau$-eigenvector of $G$. Then $z$ is an eigenvector of $G_3$ with eigenvalue $\pm 1$, since these are its only eigenvalues. Suppose for contradiction that $z$ is a 1-eigenvector for $G_3$. Since $G_3$ is a disjoint union of $K_2$'s whose edges are between antipodes of $G$, this implies that $z$ is constant on pairs of antipodes. Furthermore, since $z$ is a $\tau$-eigenvector of $G$, it is orthogonal to the all ones vector since this is a $k$-eigenvector of $G$. Thus the entries of $z$ sum to zero. Now consider any vertex $u \in V(G)$ such that $z_u \ne 0$ and let $S = \{v \in V(G) :  v \simeq u\}$ be the closed neighborhood of $u$. Then there are no pairs of antipodes contained in $S$ and no pairs of antipodes contained in $V(G) \setminus S$, since this is the closed neighborhood of the antipode of $u$. Thus the antipode relation is a bijection between $S$ and $V(G) \setminus S$. Therefore,
\[0 = \sum_{v \in V(G)} z_v = \sum_{v \in S} z_v + \sum_{v \in V(G) \setminus S} z_v = 2\sum_{v \in S} z_v.\]
This implies that $z_u + \sum_{v \sim u} z_v = 0$ and thus $(Az)_u = \sum_{v \sim u} z_v = -z_u$. Therefore, $z$ is a $-1$-eigenvector of $G$. But $\tau \ne -1$ since it is well known that the only connected graphs with least eigenvalue equal to $-1$ are the complete graphs. Thus $z$ cannot be a $\tau$-eigenvector of $G$, a contradiction.

By the above, we have that $A_3E_\tau = -E_\tau$. Let $\text{sum}(M)$ denote the sum of the entries of the matrix $M$, and note that $\text{sum}(M \circ N) = \tr(MN)$ for any symmetric matrices $M$ and $N$. Thus we have that
\begin{equation*}
\text{sum}(A_3 \circ E_\tau) = \tr(A_3E_\tau) = \tr(-E_\tau) = -d.
\end{equation*}
Since $E_\tau$ is a polynomial in $A$ and $G$ is distance regular, the entries of $E_\tau$ corresponding to pairs of vertices at distance three are all equal to some constant $\gamma$. The number of such entries is equal to the number of 1's in $A_3$ which is twice the number of edges of $G_3$. Since $G_3$ is the disjoint union of $K_2$'s, this is just $n$, the number of vertices of $G$. Therefore, $n\gamma = \text{sum}(A_3 \circ E_\tau) = -d$ and thus $\gamma = -d/n$, which is the negative of the diagonal entries of $E_\tau$. Thus for vertices $u$ and $v$ at distance three, $\langle p_u,p_v\rangle = -\langle p_u,p_u\rangle = -1$, and this implies that $p_u = -p_v$.\qeds

Using the above lemma, we can show that every Taylor graph is UVC.

\begin{theorem}
Any Taylor graph is uniquely vector colorable. Furthermore, this implies that any non-bipartite Taylor graph is a core.
\end{theorem}
\proof
Let $G$ be a Taylor graph and let $u \mapsto p_u \in \mathbb{R}^d = \spn\{p_v : v \in V(G)\}$ be its canonical vector coloring. In order to prove that $G$ is UVC, we must show that the only symmetric matrix $R$ satisfying $p_u^\trans R p_v = 0$ for $u \simeq v$ is the zero matrix. Consider the subspace
\[V_u = \spn\{p_w : w \simeq u\}.\]
We will show that $V_u = \mathbb{R}^d$. Let $v$ be the antipode of $u$. 
Consider a vertex $x \in V(G)$ whose antipode is $y \in V(G)$. If $x \simeq u$, then $p_x \in V_u$ by definition, and we are done. Otherwise we must have $y \simeq u$, since $V(G) \setminus \{w \in V(G) : w \simeq u\} = \{w \in V(G) : w \simeq v\}$ and it is not possible for both $x$ and $y$ to be contained in the closed neighborhood of $v$ because they are at distance three. If $y \simeq u$ then $-p_x = p_y \in V_u$ and thus $p_x \in V_u$. Thus $p_x \in V_u$ for all $x \in V(G)$ and therefore $V_u = \mathbb{R}^d$, and this holds for all $u \in V(G)$. 

The equation $p_v^\trans R p_u = 0$ for $u \simeq v$ implies that for fixed $u$ the vector $Rp_u$ lies in $V_u^\bot$. By the above, this means that $Rp_u = 0$ for all $u \in V(G)$, and thus $R = 0$ as desired. This implies that any Taylor graph $G$ is UVC, and thus by Theorem~\ref{thm:2walkreg},  $G$ is a core unless it is bipartite or complete multipartite. Since complete multipartite graphs have diameter two, they are never Taylor graphs. Thus we have shown that a Taylor graph is a core unless it is bipartite (which is possible).\qeds

We remark that  bipartite Taylor graphs  are known as   \emph{crown graphs}, i.e., complete bipartite graphs with a perfect matching removed. In terms of the parameters of a Taylor graph this occurs whenever $\mu = k-1$.

\subsection{Computations for strongly regular graphs}\label{sec:SRGcomps}
Motivated by Theorem \ref{thm:2walkreg},  we now give an algorithm  for showing that a 2-walk-regular graph is UVC, and thus, a core.  This relies  on the following~result.

 

\begin{lemma}\label{cor:coretest}
Let $G$ be a 1-walk-regular graph and let  $i\mapsto p_i\in \R^d$   be its  canonical vector coloring. Also, let $d$ be the multiplicity of the least eigenvalue of $G$. Then, $G$ is UVC if and only if
\[\dim\left(\spn\{p_e: e\in E(G)\}\right)={d+1\choose 2},\]
where \be
p_e:=p_ip_j^\trans+p_jp_i^\trans, \quad \text{ for all }  e=\{i,j\}\in E(G),
\ee
If $G$ is additionally 2-walk-regular then it is a core unless it is bipartite or complete multipartite.
\end{lemma}

\begin{proof}

Let $G$ be a 2-walk-regular graph, and let $i\mapsto p_i\in \R^d$  be its  canonical vector coloring. 
%
Also, recall that the canonical vector coloring satisfies
%
\be\label{vdbrth}
\tau p_i=\sum_{j\sim i}p_j, \quad \forall i\in [n],
\ee
%
%
where $\tau$ is the least eigenvalue of $G$ (which is not zero). Thus, if $p_i^\trans Rp_j = 0$ for all $i \sim j$, it follows by \eqref{vdbrth}  that $p_i^\trans Rp_i = 0$ for all $i\in [n]$. 

By Theorem \ref{thm:1walkreg}, the graph $G$ is UVC if and only if    condition  \eqref{eq:conicatinfinity} holds, which by the previous discussion  can be equivalently expressed as  
\be\label{eq:conicatinfinity_simplified}
p_i^\trans R p_j ={\tr(R(p_j p_i^\trans+p_ip_j^\trans))\over 2}= 0, \text{ for all  } i \sim j \ \Longrightarrow\  R=0,
\ee
 In turn,  Equation \eqref{eq:conicatinfinity_simplified}  expresses  that the matrices     $\{p_e: e\in E(G)\}$ span the space of  symmetric $d\times d$ matrices, which has dimension $\binom{d+1}{2}.$
The proof is concluded by Theorem~\ref{thm:2walkreg}. 
\end{proof}

To  use Lemma \ref{cor:coretest} we need to determine  the canonical vector coloring of $G$ and then  compute the matrices  $p_e$. This requires us to compute an orthonormal basis of the least eigenspace of $G$. However, these eigenvectors may contain irrational entries. Since we are interested in the dimension of the span of the $p_e$, our computations must be exact, rather than numerical. Thus, this approach may produce some computational difficulties. Instead, we use  a method of determining $\dim\left(\spn\{p_e: e\in E(G)\}\right)$ that avoids  eigenvector computations. The  details of the implementation are given in  Appendix \ref{appendix:comput}.

%
  As a case study, we applied this method to investigate  how often a strongly regular graph happens to be a core.
    The parameter set of a {\em strongly regular graph} (SRG) is a 4-tuple $(v,k,a,c)$ where $v$ is the number of vertices, $k$ is the degree of each vertex, $a$ is the number of common neighbors for every  pair of adjacent vertices, and $c$ is the number of common neighbors for each pair of non-adjacent vertices. 
SRGs are examples of 2-walk-regular graphs that are of significant  interest to graph theorists. 

The characterization of the cores of SRGs is the subject of a conjecture of Cameron and Kazinidis~\cite{symmetriccores} that was recently verified by Roberson~\cite{RobersonSRG}: The  core of any strongly regular graph is either itself or a complete~graph.

 
 For our data set, we used Ted Spence's list of SRGs available online~\cite{Spence}. The data from these computations is summarized in Table \ref{srgtable} in  Appendix \ref{appendix:comput}.  Overall, approximately 84\% of the strongly regular graphs we tested were UVC and therefore cores. A natural question is how many of the non-UVC graphs are cores. By the result of Roberson~\cite{RobersonSRG}, a SRG  is a core if and only if its clique number is \emph{not} equal to its chromatic number. Using this we verified that only 79 of the 73816 strongly regular graphs we considered are not cores. This shows that almost $99.9\%$ of all considered instances  were~cores. 
\section{Homomorphisms of graphs with $\chiv(G) =\chiv(H)$}\label{sec:equalchivec}

In this section we  give necessary and sufficient conditions for the existence   of homomorphisms between two graphs with  equal  vector chromatic numbers. Our main tool is the following  result. 

 \begin{lemma}\label{lem:submatrix}
Consider    two graphs  $G$ and $H$ where $G\to H$, $G$ is UVC and $\chiv(G)=\chiv(H)$.  If $\varphi_1$ is an optimal vector coloring of $H$  and $\varphi_2$  is the unique optimal vector coloring of $G$, we have that 
$$\{\langle \varphi_2(g),  \varphi_2(g')\rangle: g, g' \in V(G)\}\subseteq \{\langle \varphi_1(h), \varphi_1(h')\rangle: h, h' \in V(H)\}.$$
\end{lemma} 
\begin{proof}Let $\varphi$ be a homomorphism $G\to H$. Since $\chiv(G)=\chiv(H)$, 
 the map  $\varphi_1\circ\varphi$ is  an optimal  vector coloring of $G$. Lastly, as  $G$ is UVC we have that 
 $$\langle \varphi_2(g),  \varphi_2(g')\rangle: g, g' \in V(G)\}=\{\langle (\varphi_1\circ \varphi)(g),  (\varphi_1\circ \varphi)(g')\rangle: g, g' \in V(G)\},$$
 and the latter set is clearly contained in $\{\langle \varphi_1(h), \varphi_1(h')\rangle: h, h' \in V(H)\}.$
\end{proof}
As we now show, this simple observation yields some algebraic conditions between $G$ and $H$ which allows us {to restrict} the {possible} homomorphisms~$G\to~H$.

\subsection{Kneser graphs}\label{sec:kneser}
As already mentioned in the introduction, 
 Stahl  used the Erd\H{o}s-Ko-Rado Theorem to show that if $n/r = n'/r'$, then $K_{n:r} \to K_{n':r'}$ if and only if $n'$ is an integer multiple of $n$ (in which case $r'$ is an integer multiple of $r$ as well)~\cite{stahl}. Since $\chiv(K_{n:r})=n/r$, we can apply Lemma~\ref{lem:submatrix}  to obtain an alternative proof of this result. 


\begin{theorem}[\cite{stahl}]\label{thm:kneser_hom}
Let $n,r,n',r'$ be integers satisfying $n > 2r$ and $n/r = n'/r'$. Then there exists a homomorphism from $K_{n:r}$ to $K_{n':r'}$ if and only if $n'$ and $r'$ are integer multiples of $n$ and $r$ respectively.
\end{theorem}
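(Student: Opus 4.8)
The theorem has two directions. The ``if'' direction is easy: if $n'=mn$ and $r'=mr$ for some positive integer $m$, then I would construct an explicit homomorphism $K_{n:r}\to K_{n':r'}$. The natural map is to split the ground set $[n']=[mn]$ into $m$ consecutive blocks each of size $n$, and send an $r$-subset $S\subseteq[n]$ to the union of the $m$ ``copies'' of $S$, one in each block. This produces an $mr=r'$-subset of $[n']$, and disjoint subsets map to disjoint subsets, so adjacency is preserved. This half requires no vector coloring machinery at all.

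\textbf{The hard direction.} For the ``only if'' direction, I would use Lemma~\ref{lem:submatrix}. Since $K_{n:r}$ is UVC (it is 1-walk-regular and shown UVC in~\cite{UVC1}) and $\chiv(K_{n:r})=n/r=n'/r'=\chiv(K_{n':r'})$, any homomorphism $\varphi:K_{n:r}\to K_{n':r'}$ must, after composing with the optimal vector coloring of $K_{n':r'}$ from~\eqref{eq:canonicalvc_kneser}, reproduce the unique optimal vector coloring of $K_{n:r}$ as a principal submatrix of the target Gram matrix. The key point is that the inner product~\eqref{eq:canonicalvckneser} between the vectors assigned to two $r'$-subsets of $[n']$ depends only on the ratio $|\varphi(S)\cap\varphi(S')|/r'$ and on the common value $n/r=n'/r'$. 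Matching this to the inner product prescribed for the $r$-subsets $S,S'$ of $[n]$ forces, for each pair with $|S\cap S'|=k$, the relation
\be
\frac{|\varphi(S)\cap\varphi(S')|}{r'}=\frac{k}{r}.
\ee
Thus $|\varphi(S)\cap\varphi(S')| = (r'/r)\,k$ for every pair of $r$-subsets and every $k\in\{0,1,\dots,r\}$. Since the left side is a nonnegative integer for all $k$, taking $k=1$ forces $r'/r$ to be a positive integer, say $r'=mr$; then $n'=(n/r)r'=mn$ as well, which is exactly the desired conclusion.

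\textbf{Executing the intersection-counting step.} The main work is to justify that $\varphi$ really does force $|\varphi(S)\cap\varphi(S')|=(r'/r)k$ for all $k$, and in particular to realize every intersection size $k\in\{0,\dots,r\}$ by some pair $S,S'$ (which is immediate for $n>2r$, as one can freely choose overlapping $r$-subsets). From $|\varphi(S)\cap\varphi(S')|=m k$ with $m=r'/r$ an integer, one reads off that $\varphi$ scales every pairwise intersection by the factor $m$; specializing $k=r$ (i.e.\ $S=S'$) gives $|\varphi(S)|=mr=r'$ consistently, and $k=0$ recovers that $\varphi$ maps disjoint sets to disjoint sets, confirming it is a homomorphism. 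The divisibility $n\mid n'$ and $r\mid r'$ then follows purely arithmetically.

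\textbf{The main obstacle.} The delicate point is not the inner-product bookkeeping but ensuring that the image vectors $\varphi(S)\mapsto p_{\varphi(S)}$ genuinely inherit the \emph{unique} Gram matrix of $K_{n:r}$ and hence the exact inner-product formula~\eqref{eq:canonicalvckneser}, rather than merely satisfying the coloring inequalities. This is precisely what Lemma~\ref{lem:submatrix} provides: unique vector colorability upgrades the inequality $\langle p_{\varphi(S)},p_{\varphi(S')}\rangle\le -1/(n/r-1)$ into the equality dictated by the canonical coloring of the source graph. The remaining care is to argue that distinct intersection sizes $k$ in the source are not collapsed, i.e.\ that the correspondence $k\mapsto |\varphi(S)\cap\varphi(S')|$ is well-defined across all pairs achieving a given $k$; this follows because the source Gram matrix entry is a function of $k$ alone, so the matched target entry, and hence $|\varphi(S)\cap\varphi(S')|$, is determined by $k$. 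Once this rigidity is in place, the integrality of $m=r'/r$ and the divisibilities drop out immediately.
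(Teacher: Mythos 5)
Your proposal is correct and follows essentially the same route as the paper: the explicit block/product construction for the ``if'' direction, and for the ``only if'' direction an application of Lemma~\ref{lem:submatrix} to match the inner-product formula~\eqref{eq:canonicalvckneser}, specializing to intersection size $k=1$ to force $r'/r\in\mathbb{Z}$ and hence $n\mid n'$. The only cosmetic difference is that the paper phrases the matching step as an inclusion of the sets of attainable inner products rather than pairwise, which is equivalent.
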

\begin{proof}
If $n' = mn$ we have that  $r' = mr$ (as $n/r = n'/r'$). To show that $K_{n:r} \to~
K_{n':r'}$  we  consider the vertex set of $K_{n':r'}$ to be the $r'$-subsets of $[m] \times [n]$.
The desired homomorphism   maps any $r$-subset $S\subseteq [n]$ to $[m] \times S$. 

Conversely,  consider a homomorphism   $\varphi: K_{n:r} \to K_{n':r'}$.  By assumption $\gamma:=n/r = n'/r'$ and thus these  two graphs have the same vector chromatic numbers. 
Given two sets $S,S'\subseteq [n]$ with $|S\cap S'|=k$  it follows by  \eqref{eq:canonicalvckneser} that 
\be
 \langle p_S, p_{S'}\rangle=\frac{k}{r}\cdot \frac{\gamma}{\gamma - 1} - \frac{1}{\gamma - 1},
 \ee
where $S\mapsto p_S$ is the canonical  vector coloring of $K_{n:r}$.
By Lemma~\ref{lem:submatrix} we~have 
\be\label{eq:kneserinclusion}
\left\{ \frac{k}{r}\cdot \frac{\gamma}{\gamma - 1} - \frac{1}{\gamma - 1}: k \in [r] \right\}\subseteq \left\{ \frac{k'}{r'}\cdot \frac{\gamma}{\gamma - 1} - \frac{1}{\gamma - 1}: k'\in [r']\right\}.
\ee
In particular,  it follows by \eqref{eq:kneserinclusion} that for $k=1$ 
there exists a $k' \in [r']$ such that
\[\frac{1}{r}\cdot \frac{\gamma}{\gamma - 1} - \frac{1}{\gamma - 1} = \frac{k'}{r'}\cdot \frac{\gamma}{\gamma - 1} - \frac{1}{\gamma - 1}.\]
This holds if and only if $1/r = k'/r'$ which is equivalent to $r' = k'r$. Therefore $r'$ is an integer multiple of $r$, and thus $n'$ is an integer multiple of $n$.
\end{proof}

\subsection{$q$-Kneser graphs}\label{sec:qknesernesssufficient}
In this section we give  a necessary condition for the existence of homomorphisms between $q$-Kneser graphs. Since  $\chiv(qK_{n:r})=[n]_q/[r]_q$ we can again use Lemma~\ref{lem:submatrix}.  In fact our necessary condition is  completely analogous to Theorem~\ref{thm:kneser_hom}. The only change one needs to make is to replace $n,r$, and $k$ with their $q$-analogues $[n]_q, [r]_q$, and $[k]_q$ respectively, noting also that $[1]_q = 1$.

\begin{theorem}\label{thm:qknesernessecary}
Let $n,r,q,n',r',q'$ be integers satisfying $n > 2r$, $n' > 2r'$, and $[n]_q/[r]_q = [n']_{q'}/[r']_{q'}$. If $qK_{n:r} \to q'K_{n':r'}$, then
\[\left\{\frac{[k]_q}{[r]_q} : k \in [r]\right\} \subseteq \left\{\frac{[k']_{q'}}{[r']_{q'}} : k' \in [r']\right\}.\]
In particular, $[n']_{q'}$ and $[r']_{q'}$ are integer multiples of $[n]_q$ and $[r]_q$ respectively.
\end{theorem}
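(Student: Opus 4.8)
The plan is to mirror the proof of Theorem~\ref{thm:kneser_hom} step by step, replacing each quantity by its $q$-analogue. Suppose $\varphi: qK_{n:r} \to q'K_{n':r'}$ is a homomorphism. By hypothesis $\chiv(qK_{n:r}) = [n]_q/[r]_q = [n']_{q'}/[r']_{q'} = \chiv(q'K_{n':r'})$, so the two graphs have equal vector chromatic numbers and I may invoke Lemma~\ref{lem:submatrix}. Since $qK_{n:r}$ is UVC (shown in~\cite{UVC1} for $n \ge 2r+1$), the principal submatrix of the Gram matrix of the optimal vector coloring of $q'K_{n':r'}$ indexed by the image $\{\varphi(S)\}_S$ must coincide with the Gram matrix of the unique optimal vector coloring of $qK_{n:r}$. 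The key point, exactly as in the Kneser case, is that the inner product $\langle p_S, p_{S'}\rangle$ in the canonical coloring of $q$-Kneser graphs depends only on the dimension $k$ of the intersection $S \cap S'$, via the formula $\langle p_S,p_{S'}\rangle = \frac{[k]_q}{[r]_q}\cdot\frac{\gamma}{\gamma-1} - \frac{1}{\gamma-1}$ where $\gamma = [n]_q/[r]_q$, and likewise on the target side with $\gamma' = [n']_{q'}/[r']_{q'} = \gamma$.

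The set of inner products realized in $qK_{n:r}$ is $\{\frac{[k]_q}{[r]_q}\cdot\frac{\gamma}{\gamma-1}-\frac{1}{\gamma-1} : k \in [r]\}$, and since each such value must appear among the inner products of the target coloring (the submatrix is a submatrix of $M$), I obtain the inclusion
\[
\left\{\frac{[k]_q}{[r]_q} : k \in [r]\right\} \subseteq \left\{\frac{[k']_{q'}}{[r']_{q'}} : k' \in [r']\right\},
\]
after cancelling the common affine factor $\frac{\gamma}{\gamma-1}$ and shift $-\frac{1}{\gamma-1}$. This establishes the first claim. I should make sure to argue that the realized intersection dimensions in $qK_{n:r}$ really do range over all of $\{0,1,\dots,r\}$, i.e.\ that for each $k$ there genuinely exist two $r$-subspaces meeting in dimension $k$; this is an easy subspace-counting fact for $n \ge 2r$.

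For the ``in particular'' statement, I take $k=1$ in the inclusion, so that $\frac{[1]_q}{[r]_q} = \frac{1}{[r]_q}$ equals $\frac{[k']_{q'}}{[r']_{q'}}$ for some $k' \in [r']$, giving $[r']_{q'} = [k']_{q'}\,[r]_q$. This exhibits $[r']_{q'}$ as an integer multiple of $[r]_q$, and then the relation $[n']_{q'}/[r']_{q'} = [n]_q/[r]_q$ forces $[n']_{q'}$ to be the same integer multiple of $[n]_q$. The main obstacle, and the one subtlety that distinguishes this from the classical Kneser argument, is the status of $[1]_q$: I must use $[1]_q = 1$ (as noted in the theorem statement) so that the smallest nonzero value on the left is exactly $1/[r]_q$ rather than some $q$-dependent quantity, which is what makes the divisibility conclusion clean. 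Everything else is a faithful transcription of the $K_{n:r}$ proof with Gaussian binomials in place of ordinary ones.
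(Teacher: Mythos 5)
Your proposal is correct and matches the paper's intent exactly: the paper omits the proof of this theorem, stating only that it is obtained from the proof of Theorem~\ref{thm:kneser_hom} by replacing $n$, $r$, $k$ with $[n]_q$, $[r]_q$, $[k]_q$ and noting $[1]_q=1$, which is precisely the argument you have written out (via Lemma~\ref{lem:submatrix}, the UVC property of $qK_{n:r}$, and the $q$-analogue inner product formula).
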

As the proof of this fact  is quite similar to Theorem \ref{thm:kneser_hom}  we omit it. 
Unfortunately, we do not know how to prove a necessary and sufficient condition  for $q$-Kneser graphs. It was shown in~\cite{chowdhury} that there is a homomorphism from $q^mK_{n:r}$ to $qK_{mn:mr}$, but it is not clear if these are the only homomorphisms between $q$-Kneser graphs with the same vector chromatic number.



\subsection{Hamming graphs}\label{sec:hamming_homs}
In this section we focus on  the graphs $\hnk$ studied in Section~\ref{sec:hamming}.  
By Lemma~\ref{lem:vector_coloring_hnk} we have that
$\chiv(\hnk) = \frac{2}{2 - \frac{n}{k}}.$ 
Consequently, $\chiv(\hnk) = \chiv(H_{n',k'})$ if and only if $n/k = n'/k'$.
 Moreover, we have seen in Theorem~\ref{thm:hnk_is_uvc}
that the graph $\hnk$ is~UVC for any even integer $k\in [n/2+1,n-1]$. Furthrermore, recall that the canonical vector coloring is given by $
p_x(i) = {(-1)^{x_i}\over \sqrt{n}}, $ for all $ i\in[n]$, and note that 
\begin{equation}\label{eqn:distcol}
\langle p_x,  p_y\rangle = \frac{n - 2d(x,y)}{n} = 1 - 2\frac{d(x,y)}{n},
\end{equation}
where $d(x,y)$ is the Hamming distance of two vertices of $\hnk$.   Next we   use Lemma \ref{lem:submatrix} to characterize homomorphisms  $\hnk \to H_{n',k'}$, when~{$n/k = n'/k'$}.



\begin{theorem}\label{thm:homomor_hnk}
Consider integers  $n,k,n',k'$ where   $k < n < {2k-1}$,  $n/k = n'/k'$ and  both $k$ and $k'$ are even.
 Then, we have that  $\hnk \to H_{n',k'}$ if and only if $n'$ and $k'$ are integer multiples of $n$ and $k$, respectively. 
\end{theorem}

\begin{proof}

Since $n/k = n'/k'$ we have   that $\chiv(\hnk) = \chiv(H_{n',k'})$.  
By our assumptions on $n$ and $k$ it follows by  Theorem~\ref{thm:hnk_is_uvc}  that both $ \hnk$ and  $H_{n',k'}$ are UVC. Furthermore, 
since $n > k$ and $n/k = n'/k'$, we have that $n' > k'$.  Also, $n < 2k - 1$ implies that $n' < 2k'$ and therefore $n' \le 2k' - 1$. Therefore, by 
 Lemma~\ref{lem:submatrix}~and Equation \eqref{eqn:distcol} we get that: 
\[\left\{\frac{d}{n} : d \in [n]\right\} \subseteq \left\{\frac{d'}{n'}: d' \in [n']\right\}.\]
In particular, for $d=1$ this implies that there exists $d' \in [n']$ such that
\[\frac{1}{n} = \frac{d'}{n'},\]
and thus $n' = d'n$ (and $k'=d'k$). 

 For the other direction, assume   there exists an integer $m$ such  that $n' = mn$ and $k' =mk$.
A  homomorphism  $\hnk \to H_{n',k'}$
 is given by mapping any element of $\Z_2^n$ 
 to $m$ copies of that vector  concatenated together. 
\end{proof}

\paragraph{Acknowledgements:}  D.~E.~Roberson was supported by Cambridge Quantum Computing Ltd.~and the EPSRC, as well as Simone Severini and Fernando Brandao. R.~\v{S}\'{a}mal was partially supported by grant GA \v{C}R 16-19910S and by grant LL1201 ERC CZ of the Czech Ministry of Education, Youth and Sports. A.~Varvitsiotis was supported in part by the Singapore National Research Foundation under NRF RF Award No. NRF-NRFF2013-13.


\appendix
\section{Canonical vector coloring of Kneser graphs }\label{appendix:kneser}


 Given integers $n$ and $r$ such that $n \ge r$, the Kneser graph $K_{n:r}$ has as vertices the  $r$-subsets of $[n]$, and  two are adjacent if they are disjoint. The $q$-Kneser graph $qK_{n:r}$ has the $r$-dimensional subspaces of the finite vector space $\mathbb{F}_q^n$ as its vertices, and two of these subspaces are adjacent if they are \emph{skew}, i.e.,~if their intersection is the 0-subspace. Note that for $n < 2r$, neither of these graphs have any edges. For $n = 2r$, the Kneser graph $K_{n:r}$ is  a perfect matching, but $qK_{n:r}$ can have complicated  structure. Here we only consider the case $n \ge 2r + 1$.

Both the Kneser and $q$-Kneser graphs are vertex and edge transitive, and therefore  1-walk-regular. 
It was shown  in~\cite{UVC1} using Theorem \ref{thm:1walkreg}  that for $n \ge 2r+1$, the graphs $K_{n:r}$ and $qK_{n:r}$ are both UVC.
For completeness, we now give the  optimal vector 
   colorings  from \cite{UVC1}. 
   

For  $K_{n:r}$, the {coordinates of the} vectors in the  vector coloring  are  indexed by $[n]$. To a subset $S\subseteq [n]$ with $|S|=r$, we assign the unit vector $p_S\in \R^n$ given   by:
\be\label{eq:canonicalvc_kneser}
p_S(i)  = \begin{cases}
{ r-n\over \sqrt{nr(n-r)}}, & \text{if } i \in  S, \\
{ r\over \sqrt{nr(n-r)}}, & \text{otherwise}.
\end{cases}
\ee
The inner product of the vectors assigned to two $r$-subsets of $[n]$ depends only on the size of their intersection. Indeed, given two $r$-subsets $S,S'\subseteq [n]$ with $|S\cap S'|=k$ we have that 

\be\label{eq:canonicalvckneser}
\langle p_S, p_{S'}\rangle =
 \frac{k}{r}\cdot \frac{n/r}{n/r - 1} - \frac{1}{n/r - 1},
\ee
i.e.,  the inner product is a function of $k/r$ and $n/r$.  In particular, it is  minimized when $k=0$, or equivalently    when $S\sim S'$.  
Lastly, we show that     $S\to  p_S$ is  an optimal vector coloring. First, note that $\chiv(K_{n:r})=n/r$. This follows by Theorem~\ref{thm:1walkreg} $(i)$ using the fact that $K_{n:r}$ is $\binom{n-r}{r}$-regular and  its least eigenvalue  is $-\binom{n-r-1}{r-1}$ (e.g.,~see \cite[Theorem 9.4.3]{GR}).
On the other hand, for    $S\sim S'$ it follows by \eqref{eq:canonicalvckneser} that  
 $\langle p_S, p_{S'}\rangle =-\frac{1}{n/r - 1}$.

The  vector coloring of  the $q$-Kneser graph $qK_{n:r}$ is defined analogously.  Specifically, set  
$$[k]_q:= \frac{q^k - 1}{q-1} = \sum_{i=0}^{k-1} q^i,$$ which is     the number of lines contained in a $k$-dimensional subspace of $\mathbb{F}_q^n$. Then, to an $r$-dimensional subspace $S$ of  $\mathbb{F}_q^n$ we assign the  unit vector $p_S\in \R^{[n]_q}$, with entries indexed by the lines of $\mathbb{F}_q^n$, given~by:
\begin{equation}\label{eq:leasteigkneser}
p_{S}(\ell)= 
\begin{cases}
\frac{[r]_q-[n]_q}{\sqrt{[n]_q[r]_q([n]_q-[r]_q)}},  & \text{ if } \ell \subseteq S, \\
\frac{[r]_q}{\sqrt{[n]_q[r]_q([n]_q-[r]_q)}}, & \text{ if } \ell \cap S = \{0\}.
\end{cases}
\end{equation}
Lastly, {note}  that $qK_{n:r}$ is $q^{r^2}{n-r \brack r}_q$ regular  and its least eigenvalue is  equal to $-q^{r(r-1)}{n-r-1 \brack r-1}_q$ (e.g.,~see~\cite{EKR}). Here  ${n \brack k}_q$ denotes  the Gaussian binomial coefficient which is equal to the number of $k$-dimensional subspaces of $\mathbb{F}_q^n$. As $qK_{n:r}$ is 1-walk-regular it follows by Theorem~\ref{thm:1walkreg} $(i)$ that 
$\chiv(qK_{n:r})= [n]_q/[r]_q$. To see that \eqref{eq:leasteigkneser} is an optimal vector coloring note that for two $r$-dimensional subspaces with a   $k$-dimensional intersection we have   
\be
\langle p_S, p_{S'}\rangle =
 \frac{[k]_q}{[r]_q}\cdot \frac{[n]_q/[r]_q}{[n]_q/[r]_q - 1} - \frac{1}{[n]_q/[r]_q - 1}.
\ee
In particular, when $S\sim S'$ (i.e., $k=0$) we get that 
\[\langle p_S, p_{S'}\rangle=-\frac{1}{{[n]_q/[r]_q} - 1}.\]

\section{Computations}\label{appendix:comput}

To compute the  dimension of the span of the $\{p_e: e\in E(G)\}$, we may just   calculate  the rank of their Gram matrix. The Gram matrix of the $\{p_e: e\in E(G)\}$  is the matrix $M$ indexed by the edges of $G$ such that $M_{ef} = \tr(p_ep_f)$. Note that the value of this trace is equal to the sum of the entries of the entrywise product of $p_e$ and $p_f$, which is the usual inner product if we were to consider $p_e$ and $p_f$ as vectors. If $e = \{i,j\} \in E(G)$ and $f = \{\ell, k\} \in E(G)$, then
\begin{equation}\label{eqn:graments}
M_{ef} = \tr\left((p_ip_j^\trans+p_jp_i^\trans)(p_\ell p_k^\trans+p_kp_\ell^\trans)\right) = 2(p_j^\trans p_\ell \cdot p_k^\trans p_i + p_j^\trans p_k \cdot p_\ell^\trans p_i).
\end{equation}
Now, let $\tau$ be the least eigenvalue of $G$ and let $E_\tau$ be the projection onto its $\tau$-eigenspace. Then, recalling the definition of the canonical vector coloring, we have that $p_j^\trans p_\ell = (E_\tau)_{j\ell}$, and similarly for the other inner products appearing in~\eqref{eqn:graments}. Thus, to compute the entries of $M$, it suffices to compute the entries of $E_\tau$. Moreover, it suffices to compute a nonzero multiple of $E_\tau$ since scaling $E_\tau$ by $\gamma$ translates to scaling $M$ by $\gamma^2$, which does not affect its rank. We now describe how to do this under the assumption that $\tau$ is an integer.

Let $\lambda_1 \ge \ldots \ge \lambda_n = \tau$ be the eigenvalues of $G$ (including multiplicities) in decreasing order. Also, let $\phi$ be the characteristic polynomial of the adjacency matrix of $G$. Then $\phi$ is a monic polynomial with integer coefficients and
\[\phi(x) = \prod_{i = 1}^n (x - \lambda_i).\]
Also, define the polynomial $\phi_\tau$ as
\[\phi_\tau(x) = \prod_{\lambda_i \ne \tau} (x - \lambda_i).\]
If $d$ is the multiplicity of $\tau$ as an eigenvalue of $G$, then
\[\phi_\tau(x) = \frac{\phi(x)}{(x-\tau)^d}.\]
Since $(x - \tau)^d$ is a factor of $\phi(x)$, if $\tau$ is an integer, then $\phi_\tau$ is a monic polynomial with integer coefficients.

Now let $A$ be the adjacency matrix of $G$ and consider the matrix
\[\phi_\tau(A) = \prod_{\lambda_i \ne \tau} (A - \lambda_i I).\]
Note that all of the factors in the above product commute. If $v$ is an eigenvector of $A$ for an eigenvalue other than $\tau$, then it is easy to see that $\phi_\tau(A)v = 0$. On the other hand, if $v$ is a $\tau$-eigenvector of $A$, then
\[\phi_\tau(A)v = \left(\prod_{\lambda_i \ne \tau} (\tau - \lambda_i)\right)v \ne 0.\]
In other words, $\phi_\tau(A)$ is a nonzero multiple of $E_\tau$. Thus, for a 1-walk-regular graph $G$ with adjacency matrix $A$ and integer least eigenvalue $\tau$, we have the following algorithm for determining $\dim\left(\spn\{p_e: e\in E(G)\}\right)$:
\begin{enumerate}
\item Compute the characteristic polynomial $\phi$ of $A$.
\item Compute $\phi_\tau$ by repeatedly dividing $\phi(x)$ by $(x-\tau)$.
\item Compute $\phi_\tau(A)$, which is a multiple of $E_\tau$.
\item Use $\phi_\tau(A)$ and~\eqref{eqn:graments} to compute the Gram matrix, $M$, of the $p_e$.
\item Compute the rank of $M$ which is equal to $\dim\left(\spn\{p_e: e\in E(G)\}\right)$.
\end{enumerate}
Importantly, each of these steps can be done efficiently \emph{and} exactly with integer arithmetic~\cite{charpoly}. By Lemma~\ref{cor:coretest}, this algorithm allows us to determine a sufficient condition for showing  a 2-walk-regular graph is a core.  
We note that requiring the least eigenvalue of $G$ to be integer does not seem to be very restrictive in practice.

It appears that the significant majority of the computational time is spent on determining the rank of the Gram matrix of the $p_e$ matrices. Based on our experience, the runtime of the algorithm appears to be roughly quadratic in the number of edges of the graph. At 20,000 edges it takes about 15 minutes for the algorithm to run in Sage~\cite{sage} on our personal computers. Note that computationally testing whether a graph of this size is a core is essentially~impossible.

\begin{table}[ht!]\label{tab:comp}
\caption{Data for Strongly Regular Graphs}
\centering
\[
\begin{array}{lrrrr}
\text{Param. Set} & \text{Total Num.} & \text{Num. Tight} & \text{Num. Loose}\\\hline
(9, 4, 1, 2) &1& 0& 1\\
(10, 3, 0, 1) &1& 1& 0\\
(10, 6, 3, 4) &1& 1& 0\\
(15, 6, 1, 3)& 1& 1& 0\\
(15, 8, 4, 4) &1& 0& 1\\
(16, 5, 0, 2) &1 &1 &0\\
(16, 10, 6, 6) &1 &1& 0\\
(16, 6, 2, 2) &2 &0 &2\\
(16, 9, 4, 6) &2 &1 &1\\
(21, 10, 3, 6) &1 &1 &0\\
(21, 10, 5, 4) &1 &0& 1\\
(25, 8, 3, 2) &1 &0 &1\\
(25, 16, 9, 12)& 1& 0& 1\\
(25, 12, 5, 6) &15 &13 &2\\
(26, 10, 3, 4) &10& 9& 1\\
(26, 15, 8, 9) &10& 9& 1\\
(27, 10, 1, 5) &1& 1& 0\\
(27, 16, 10, 8) &1& 0& 1\\
(28, 12, 6, 4) &4& 0& 4\\
(28, 15, 6, 10)& 4& 4& 0\\
(35, 16, 6, 8)& 3854& 2789 &1065\\
(35, 18, 9, 9)& 3854& 2175 &1679\\
(36, 10, 4, 2)& 1& 0& 1\\
(36, 25, 16, 20)& 1& 0 &1\\
(36, 14, 4, 6) &180 &175& 5\\
(36, 21, 12, 12)& 180& 135& 45\\
(36, 14, 7, 4) &1& 0& 1\\
(36, 21, 10, 15)& 1& 1& 0\\
(36,15,6,6) & 32548 & 24022 & 8526 \\
(36,20,10,12) & 32548 & 32536 & 12 \\
(40, 12, 2, 4) &28 &16 &12\\
(40, 27, 18, 18)& 28& 17& 11\\
(45, 12, 3, 3) &78 &0 &78\\
(45, 32, 22, 24)& 78& 77& 1\\
(45, 16, 8, 4) & 1 & 0 & 1 \\
(45, 28, 15, 21) & 1 & 1 & 0 \\
(49,12,5,2) & 1 & 0 & 1 \\
(49,36,25,30) & 1 & 0 & 1 \\
(50,7,0,1) & 1 & 0 & 1 \\
(50,42,35,36) & 1 & 1 & 0 \\
(50,21,8,9) & 18 & 18 & 0 \\
(50, 28, 15, 16) & 18 & 17 & 1 \\
(64, 18, 2, 6) &167 &145 &22\\
(64, 45, 32, 30)& 167& 0& 167
\end{array}
\]
\label{srgtable}
\end{table}
\afterpage{\clearpage}

Furthermore, we note that in~\cite{UVC1} we presented another algorithm for determining whether a 1-walk-regular graph $G$ is uniquely vector colorable. This algorithm was based on solving a system of $|V(G)|^2$ linear equations in $|E(\overline{G})|$ variables. This is somewhat complementary to the algorithm given here, whose runtime depends on $|E(G)|$. In practice, the algorithm introduced above is much faster than the algorithm given in the previous work.

We now apply this algorithm to Ted Spence's list of strongly regular graphs  available online~\cite{Spence}. In this case, we can actually compute the entries of the Gram matrix of the matrices $\{p_e : e\in E(G)\}$ directly from the parameters of the strongly regular graph $G$, which saves us some work. Furthermore, $G$ has integral eigenvalues unless it is a conference graph, i.e., has parameters $(4t+1,2t,t-1,t)$ for some integer $t$. Even in this case, $G$ still has integral eigenvalues unless $4t-1$ is not a square. Thus, considering only graphs with integral least eigenvalue  is not a significant restriction for SRGs. After eliminating the graphs with non-integral eigenvalues, we were left with 73816 graphs. For each of these, we computed the Gram matrix of the matrices  $\{p_e : e\in E(G)\}$ and then calculated its rank and compared the result to $d+1 \choose 2$, where $d$ is the multiplicity of the least eigenvalue of $G$. 
We call a graph \emph{tight} if the rank  is $d+1 \choose 2$, and \emph{loose} otherwise. For each tight graph $G$, this calculation certifies that $G$ is a core. The results of these calculations are given in Table~\ref{srgtable}.


We have made some interesting observations from the data we have collected so far. There are 8526 SRGs with parameters $(36,15,6,6)$ that are not tight, but we have verified that these are all cores. The 10 graphs with parameter set $(26,10,3,4)$ are the Paulus graphs and the  10 graphs with parameter set $(26,15,8,9)$ are their complements. For each set, exactly one graph is not tight. These two graphs form a complementary pair. They correspond to the Paulus graph with the largest automorphism group (the size of this group is 120, the next largest automorphism group has size 39, the remaining Paulus graphs have less than 10 automorphisms). 
The 180 graphs with parameter set $(36,14,4,6)$ correspond to a class of real symmetric Hadamard matrices with constant diagonal. All but 5 of these graphs are tight. Even more striking are the 32548 graphs with parameters $(36,20,10,12)$, all but 12 of which are tight.

%

\end{document}